\numberwithin{equation}{section}
\numberwithin{figure}{section}
\theoremstyle{plain}
\newtheorem{thm}{Theorem}
  \theoremstyle{plain}
  \numberwithin{thm}{section}
  \newtheorem{cor}[thm]{Corollary}
  \theoremstyle{plain}
   \newtheorem{definition}[thm]{Definition}
  \theoremstyle{plain}
   \newtheorem{proposition}[thm]{Proposition}
  \theoremstyle{remark}
  \newtheorem{rem}[thm]{Remark}
  \def\Ddots{\mathinner{\mkern1mu\raise\p@
\vbox{\kern7\p@\hbox{.}}\mkern2mu
\raise4\p@\hbox{.}\mkern2mu\raise7\p@\hbox{.}\mkern1mu}}
\newcommand{\norm}[1]{\left\| #1 \right\|}
\newcommand{\mklm}[1]{\left\{ #1 \right\}}
\newcommand{\eklm}[1]{\left\langle #1 \right\rangle}
\renewcommand{\d}{\,d}
\newcommand{\N}{{\mathbb N}}
\newcommand{\Z}{{\mathbb Z}}
\newcommand{\R}{{\mathbb R}}
\newcommand{\D}{{\mathcal D}}
\renewcommand{\H}{{\mathcal H}}
\newcommand{\I}{{\mathcal I}}
\newcommand{\J}{{\mathcal J }}
\newcommand{\Jbb}{{\mathbb J }}
\newcommand{\M}{{\mathcal M}}
\renewcommand{\O}{{\mathcal O}}
\newcommand{\W}{{\mathcal W}}
\newcommand{\1}{{\bf 1}}
\renewcommand{\epsilon}{\varepsilon}
\renewcommand{\rho}{\varrho}
\newcommand{\Cinft}{{\rm C^{\infty}}}
\newcommand{\CT}{{\rm C^{\infty}_c}}
\renewcommand{\L}{{\rm L}}
\newcommand{\Lcal}{{\mathcal L}}
\renewcommand{\S}{{\mathcal S}}
\newcommand{\SO}{\mathrm{SO}}
\newcommand{\g}{{\bf \mathfrak g}}
\renewcommand{\t}{{\bf \mathfrak t}}
\newcommand{\vol}{\text{vol}\,}
\newcommand{\dist}{\text{dist}\,}
\newcommand{\Crit}{\mathrm{Crit}}
\DeclareMathOperator{\supp}{supp\,}
\DeclareMathOperator{\gd}{\partial}
\DeclareMathOperator{\grad}{grad}
\newcommand{\e}[1]{\,{\mathrm e}^{#1}\,}
\newcommand{\bdm}{\begin{displaymath}}
\newcommand{\edm}{\end{displaymath}}
\newcommand{\bq}{\begin{equation}}
\newcommand{\eq}{\end{equation}}
\newcommand{\bqn}{\begin{equation*}}
\newcommand{\eqn}{\end{equation*}}
\begin{document}
\author{Pablo Ramacher}
\email{ramacher@mathematik.uni-marburg.de}
\address{Philipps-Universit\"at Marburg, FB  12 Mathematik und Informatik, Hans-Meerwein-Str., 35032 Marburg}
\title{Addendum to ''The equivariant spectral function of an invariant elliptic operator''}

\begin{abstract} Let $M$ be a compact boundaryless Riemannian manifold, carrying an  effective and isometric action of a torus $T$,  and $P_0$ an invariant elliptic classical pseudodifferential operator on $M$. In this note,  we strengthen  the asymptotics  for the equivariant (or reduced) spectral function  of $P_0$  derived in \cite{ramacher16}, which are already sharp in the eigenvalue aspect, to become almost sharp in the isotypic aspect.  In particular, this leads to hybrid equivariant $\L^p$-bounds  for eigenfunctions that are almost sharp  in the eigenvalue and isotypic aspect. 
\end{abstract}

\maketitle

\setcounter{tocdepth}{1}
\tableofcontents{}

\section{Introduction}

Let $M$ be a closed $n$-dimensional Riemannian manifold with an effective and isometric action of a compact Lie group $G$. In this paper, we strenghten the asymptotics derived in \cite{ramacher16} for  the equivariant (or reduced) spectral function of  an invariant elliptic operator on $M$, which are already sharp in the eigenvalue aspect, to become also almost sharp in the isotypic aspect in case that $G=T$ is a torus, that is, a compact connected Abelian Lie group. In particular, if $T$ acts on $M$ with orbits of the same dimension,  we obtain hybrid equivariant $\L^p$-bounds for eigenfunctions that are almost sharp up to a logarithmic factor.
 
To explain our results, consider  an elliptic classical pseudodifferential operator 
\bqn 
P_0:\Cinft(M) \, \longrightarrow \, \L^2(M)
\eqn
of degree $m$ on $M$ acting on the Hilbert space of square integrable functions   on $M$  with the space of smooth functions on $M$ as domain. We assume that $P_0$ is positive and symmetric, so that it has a unique self-adjoint extension $P$, which  has discrete spectrum. Let $\mklm{E_\lambda}$ be a spectral resolution of $P$, and denote by $e(x,y,\lambda)$   the \emph{spectral function} of $P$ which is given by the Schwartz kernel of $E_\lambda$.
Further, assume that  $M$ carries an effective and isometric action of a compact Lie group $G$ with Lie algebra $\g$ and orbits of dimension less or equal $n-1$. 
Suppose that $P$ commutes with the {left-regular representation} $(\pi,\L^2(M))$ of $G$ so that each eigenspace of $P$ becomes a unitary $G$-module. If $\widehat G$ denotes the set of equivalence classes of irreducible unitary representations of $G$,  the Peter-Weyl theorem asserts that 
 \bq
\label{eq:PW} 
\L^2(M)=\bigoplus_{\gamma \in \widehat G} \L^2_\gamma(M),
\eq
a Hilbert sum decomposition, where $\L^2_\gamma(M):=\Pi_\gamma \L^2(M)$ denotes the $\gamma$-isotypic component, and $\Pi_\gamma$  the corresponding projection.  Let $e_\gamma(x,y,\lambda)$ be the spectral function of the operator $P_\gamma:=\Pi_\gamma \circ P\circ \Pi_\gamma$, which is also called the \emph{reduced spectral function} of $P$. Further, let $\Jbb:T^\ast M \to \g^\ast$ denote the momentum map of the Hamiltonian $G$-action on $T^\ast M$, induced by the action of $G$ on $M$,  and write $\Omega:=\Jbb^{-1}(\mklm{0})$.  In \cite[Theorem 4.3]{ramacher16},  the \emph{equivariant local Wey law}
 \bqn
\left |e_\gamma(x,x,\lambda)- \lambda^{\frac{n-\kappa_x}{m}} \frac{d_\gamma [\pi_{\gamma|G_x}:\1]}{(2\pi)^{n-\kappa_x}}  \int_{\{ (x,\xi) \in \Omega, \, p(x,\xi)< 1\}} \frac{ \d \xi}{\vol \O_{(x,\xi)}} \right | \leq C_{x,\gamma} \, \lambda^{\frac{n-\kappa_x-1}{m}}, \quad x \in M, 
\eqn 
was shown  as $\lambda \to +\infty $, where $\kappa_x:=\dim \O_x$ is the dimension of the $G$-orbit through $x$,  $d_\gamma$ denotes the dimension of an irreducible $G$-representation $\pi_\gamma$ belonging to $\gamma$ and $ [\pi_{\gamma|G_x}:\1]$ the multiplicity of the trivial representation  in the restriction of $\pi_\gamma$ to the isotropy group $G_x$ of $x$, while $C_{x,\gamma}>0$ is a constant satisfying
 \bq
 \label{eq:25.5.2018a}
 C_{x,\gamma} =O_x\big (  d_\gamma \sup_{l \leq \lfloor \kappa_x/2+3 \rfloor} \norm{\D^l \gamma}_\infty\big ), 
 \eq
 and $D^l$ are differential operators on $G$ of order $l$.  Both the leading term and the constant $C_{x,\gamma}$   in general depend in a highly non-uniform way on $x\in M$, exhibiting a caustic behaviour in the neighborhood of singular orbits. 
A precise description of this  caustic behaviour was achieved in \cite{ramacher16} by relying on the results \cite{ramacher10} on singular equivariant asymptotics  obtained via resolution of singularities.  More precisely, consider the stratification  $M=M(H_1) \, \dot \cup \dots \dot \cup \, M(H_L)$ of $M$ into orbit types, arranged in such a way that 
$(H_i) \leq (H_j)$ implies $i \geq j$, and let $\Lambda$ be the maximal length that a maximal totally ordered subset of isotropy types can have. Write  $M_\mathrm{prin}:=M(H_L)$, $M_\mathrm{except}$, and $M_\mathrm{sing}$ for the union of all orbits of principal, exceptional, and singular type, respectively, so that 
\bqn
M= M_\mathrm{prin}\, \dot \cup \, M_\mathrm{except}\, \dot \cup \, M_\mathrm{sing},
\eqn
  and denote by $\kappa:=\dim G/H_L$ the dimension of an orbit of principal type.   Then, by   \cite[Theorem 7.7]{ramacher16} one has  for $x \in M_\mathrm{prin}\cup M_\mathrm{except}$  and $\lambda \to +\infty $ the \emph{singular equivariant local Weyl law}
   \begin{align*}
 \begin{split}
 \Big |e_\gamma(x,x,\lambda)&- \frac{d_\gamma \lambda^{\frac{n-\kappa}{m}}}{(2\pi)^{n-\kappa}} \sum_{N=1}^{\Lambda-1} \,  \sum_{{i_1<\dots< i_{N} }} \, \prod_{l=1}^{N}   |\tau_{i_l}|^{\dim G- \dim H_{i_l}-\kappa}  \mathcal{L}^{0,0}_{i_1\dots i_{N} }(x,\gamma) \Big  |\\ 
&\leq \widetilde C_\gamma \lambda^{\frac{n-\kappa-1}m} \sum_{N=1}^{\Lambda-1}\, \sum_{{i_1<\dots< i_{N}}}   \prod_{l=1}^N     |\tau_{i_l}|^{\dim G- \dim H_{i_l}-\kappa-1}, 
\end{split}
\end{align*}
 where the multiple sums run over all possible maximal totally ordered subsets $\mklm{(H_{i_1}),\dots, (H_{i_N})}$ of singular isotropy types, the  coefficients $\mathcal{L}^{0,0}_{i_1\dots i_{N}}$  are explicitly given and bounded functions  in $x$, and  $\tau_{i_j} =\tau_{i_j}(x)\in (-1,1)$ are desingularization parameters that arise in the resolution process satisfying  $|\tau_{i_j}|\approx \dist (x, M(H_{i_j}))$, while  $\widetilde C_\gamma>0$ is a constant independent of $x$ and $\lambda$ that fulfills
 \bq
 \label{eq:25.5.2018b}
\widetilde  C_{\gamma} =O\big (  d_\gamma \sup_{l \leq \lfloor \kappa/2+3 \rfloor} \norm{\D^l \gamma}_\infty\big ).
 \eq 
  
As a major consequence, the above expansions lead to  equivariant bounds for  eigenfunctions. In the non-singular case, that is, when only principal and exceptional orbits are present,  and consequently all $G$-orbits have the same dimension $\kappa$,   the hybrid $\L^q$-estimates 
\bq
\label{eq:Lqbound}
\norm{u}_{\L^q(M)} \leq \begin{cases} C_\gamma \,  \lambda^{\frac{\delta_{n-\kappa}(q)}{m}} \norm{u}_{\L^2}, &  \frac{2(n-\kappa+1)}{n-\kappa-1} \leq q \leq \infty, \vspace{2mm} \\ C_\gamma \, \lambda^{\frac{(n-\kappa-1)(2-q')}{4m q'}} \norm{u}_{\L^2}, &  2 \leq q \leq \frac{2(n-\kappa+1)}{n-\kappa-1}, \end{cases} 
\eq
were shown in \cite[Theorem 5.4]{ramacher16} for any eigenfunction   $u \in \L^2_\gamma(M)$ of $P$   with eigenvalue $\lambda$,  where $\frac 1q+\frac 1{q'}=1$, $\delta_n(p):=\max \left ( n \left |1/ 2 - 1/p \right | -1/2,0 \right )$,  and $C_{\gamma}>0$ is a constant independent of $\lambda$ satisfying the estimate
 \bq
 \label{eq:24.07.2017}
 C_\gamma \ll \sqrt{d_\gamma \sup_{l \leq \lfloor \kappa/2+1\rfloor} \norm{D^l\gamma}_\infty},
 \eq
provided that  the co-spheres $S_x^\ast M$ are strictly convex. Note that for the proof of $\L^p$-bounds it is necessary to describe the caustic behaviour of the relevant spectral kernels  as $\mu\to +\infty $ in  a neighborhood of the diagonal, which makes things considerably more envolved. 
In case that singular orbits are present,   one has the pointwise bound
\bq
\label{eq:4.12.2015}
\sum_{\stackrel{\lambda_j \in (\lambda,\lambda+1],}{ e_j \in \L^2_\gamma(M)}} |e_j(x)|^2  \leq \begin{cases}  C \, \lambda^{\frac{n-1}m}, & \hspace{-.0cm} x\in M_\mathrm{sing}, \\
& \\
\widetilde C_\gamma \, \lambda^{\frac{n-\kappa-1}m} \sum\limits_{N=1}^{\Lambda-1}\, \sum\limits_{{i_1<\dots< i_{N}}}   \prod\limits_{l=1}^N  |\tau_{i_l}|^{\dim G- \dim H_{i_l}-\kappa-1}, & x\in M- M_\mathrm{sing},  \end{cases}
\eq
for a constant  $C>0$  independent of $\gamma$,   where  $\mklm{e_j}_{j \geq 0}$ is an orthonormal basis of $\L^2(M)$  compatible with the decomposition \eqref{eq:PW}, 
 showing that eigenfunctions tend to concentrate along lower dimensional orbits.

The aim of this note is to sharpen the above results in the isotypic aspect in case that $G=T$ is a torus, and show that instead of the bounds \eqref{eq:25.5.2018a} and  \eqref{eq:25.5.2018b} one has the better estimates
\bqn
C_{x,\gamma}=O_x\Big (\sup_{l \leq 1 }\norm{D^l \gamma}_\infty\Big ), \qquad \widetilde C_{\gamma}=O\Big (\sup_{l \leq 1 }\norm{D^l \gamma}_\infty\Big ), \qquad \gamma \in \W_\lambda,
\eqn
where $\W_\lambda$ denotes the set of representations
\bqn
\W_\lambda:=\mklm{\gamma \in \widehat T' \mid |\gamma | \leq \frac{\lambda^{1/m}}{\log \lambda}}.
\eqn
Here $\widehat T'\subset$ stands for the subset of representations occuring in the Peter-Weyl decomposition \eqref{eq:PW}, and we denoted the differential  of a character $\gamma\in \widehat T$, which corresponds to an integral linear form $\gamma:\t \rightarrow i\R$,  by the same letter. 
Similarly, it will be shown that the constant $C_\gamma$ in \eqref{eq:24.07.2017} actually satisfies the bound
 \bqn
 C_\gamma \ll 1, \qquad \gamma \in \W_\lambda. 
 \eqn
 By the equivariant Weyl law \cite{ramacher10} and Gauss' law,  $|\gamma|$ can grow at most of rate $\lambda^{1/m}$. 
Thus, the bounds \eqref{eq:Lqbound} hold for \emph{almost any} eigenfunction $u\in \L^2(M)$ with $C_\gamma$ independent of $\gamma$, which is consistent with recent results of Tacy \cite{tacy18}.
As will be discussed, the improved bounds are almost sharp in this sense, being already attained for $\SO(2)$-actions on the $2$-sphere and the $2$-torus. For their proof, a careful examination of the remainder in the stationary phase expansion of the relevant spectral kernels is necessary.  These bounds are crucial for deriving  hybrid subconvex bounds  for Hecke-Maass forms on compact arithimetic quotients of semisimple Lie groups in the eigenvalue and isotypic aspect \cite{ramacher-wakatsuki17}. 

Through the whole document, the notation $O(\mu^{k}), k \in \R \cup \mklm{\pm \infty},$ will mean an  upper bound of the form $C \mu^k$  with a constant $C>0$ that is uniform in all relevant variables, while $O_\aleph(\mu^{k})$ will denote an upper bound of the form $C_\aleph \,  \mu^k$ with a constant $C_\aleph> 0$ that depends on the indicated variable $\aleph$. In the same way,  we shall write $a\ll_\aleph b$ for two real numbers $a$ and $b$, if there exists a constant $C_\aleph>0$ depending only on $\aleph$ such that $|a| \leq C_\aleph b$, and similarly $a \ll b$, if the bound is uniform in all relevant variables. Finally, $\N$ will denote the set of natural numbers $0,1,2,3,\dots$. \\


\section{The reduced spectral function of an invariant elliptic operator}
\label{sec:RSF}

Let $M$ be  a closed connected Riemannian manifold  of dimension $n$ with Riemannian volume density $dM$, and $P_0$ an elliptic classical pseudodifferential operator on $M$ 
of degree $m$ which  is positive and symmetric.  The  principal symbol $p(x,\xi)$ of $P_0$  constitutes a strictly positive function on $T^\ast M\setminus\mklm{0}$, where  $T^\ast M$ denotes the cotangent bundle  of $M$. The operator $P_0$ has a unique self-adjoint extension $P$, its domain being the $m$-th Sobolev space $H^m(M)$. It is well known that there exists  an orthonormal basis  $\mklm{e_j}_{j\geq 0}$ of $\L^2(M)$ consisting of eigenfunctions of $P$ with eigenvalues $\mklm{\lambda_j}_{j \geq 0}$ repeated according to their multiplicity, and that $Q:=\sqrt[m]{P}$ constitutes  a classical pseudodifferential operator of order $1$ with principal symbol $q(x,\xi):=\sqrt[m]{p(x,\xi)}$ and domain  $H^1(M)$. Again, $Q$ has discrete spectrum, and its eigenvalues  are given by $\mu_j:=\sqrt[m]{\lambda_j}$.  The spectral function $e(x,y,\lambda)$ of $P$ can then be described by  studying the spectral function of $Q$, which in terms of the basis $\mklm{e_j}$ is given by
\bqn 
e(x,y,\mu):=\sum_{\mu_j\leq \mu} e_j(x) \overline{e_j(y)}, \qquad \mu\in \R, 
\eqn
and belongs to $\Cinft(M \times M)$ as a function of $x$ and $y$. Let $\chi_\mu$ be the spectral projection onto the sum of eigenspaces of $Q$ with eigenvalues in the interval  $(\mu, \mu+1]$, and denote its Schwartz kernel by $\chi_\mu(x,y):=e(x,y,\mu+1) - e(x,y,\mu)$. To obtain an asymptotic description of the spectral function of $Q$ let $\rho \in \S(\R,\R_+)$ be such that $\rho(0)=1$ and $\supp \hat \rho\in (-\delta/2,\delta/2)$ for a given $\delta>0$, and define the {approximate spectral projection operator} 
\bq
\label{eq:2.1} 
\widetilde \chi_\mu u := \sum_{j=0}^\infty \rho(\mu-\mu_j) E_{j} u, \qquad u \in \L^2(M),
\eq
where $E_j$ denotes the orthogonal projection onto the subspace spanned by $e_j$. Clearly, $K_{\widetilde \chi_\mu}(x,y):=\sum_{j=0}^\infty \rho(\mu-\mu_j) e_j(x) \overline{e_j(y)}\in \Cinft(M\times M)$ constitutes the kernel of $\widetilde \chi_\mu$. 
As H\"ormander \cite{hoermander68} showed,  $\widetilde \chi_\mu$ can be approximated by Fourier integral operators yielding an asymptotic formula for the kernels of $\widetilde \chi_\mu$ and  $\chi_\mu$, and finally for the spectral function of $Q$ and $P$. 
 
Now, assume that  $M$ carries an effective and isometric action of a compact   Lie group $G$. Let $P$ commute with the left-regular representation $(\pi,\L^2(M))$ of $G$. Consider  the  Peter-Weyl decomposition  of $\L^2(M)$, and let $\Pi_\gamma$  be the projection onto the isotypic component belonging to $\gamma \in \widehat G$, which is given by the Bochner integral
\bqn 
\Pi_\gamma=d_\gamma \intop_G \overline{\gamma(g)} \pi(g) \d_G(g),
\eqn 
where $d_\gamma$ is the dimension of an unitary irreducible  representation of class $\gamma$, and  $d_G(g) \equiv dg$  Haar measure on $G$, which we assume to be normalized such that $\vol G=1$. If $G$ is finite, $d_G$ is simply the counting measure.  In addition, let us suppose that the orthonormal basis $\mklm{e_j}_{j\geq 0}$ is  compatible with the Peter-Weyl decomposition in the sense that each vector $e_j$ is contained in some isotypic component $\L^2_\gamma(M)$. In order to describe   the spectral function of the operator $Q_\gamma:=\Pi_\gamma \circ Q\circ \Pi_\gamma=Q\circ \Pi_\gamma=\Pi_\gamma \circ Q$ given by
\bq
\label{eq:24.09.2015}
e_\gamma (x,y,\mu):=\sum_{\mu_j\leq \mu,\, e_j \in \L^2_\gamma(M)} e_j(x) \overline{e_j(y)},
\eq
we consider   the composition $ \chi_\mu\circ \Pi_\gamma$ with kernel 
$
K_{\chi_\mu \circ \Pi_\gamma}(x,y)=e_\gamma(x,y,\lambda+1)-e_\gamma(x,y,\lambda)
$, 
together with  the corresponding equivariant approximate spectral projection
\begin{align}
\label{eq:1004}
(\widetilde \chi_\mu \circ \Pi_\gamma) u = \sum_{j\geq 0,\, e_j \in \L^2_\gamma(M)} \rho(\mu-\mu_j) E_{j} u.
\end{align}
Its kernel can be written as 
\bqn 
K_{\widetilde \chi_\mu \circ \Pi_\gamma}(x,y):=\sum_{j\geq 0, e_j \in \L^2_\gamma(M)} \rho(\mu-\mu_j) e_j(x) \overline{e_j(y)}\in \Cinft(M\times M).
\eqn
By using Fourier integral operator methods, it was shown in \cite{ramacher16} that  the kernel of $\widetilde \chi_\mu \circ \Pi_\gamma$ can be expressed as follows. Let $\mklm{(\kappa_\iota, Y_\iota)}_{\iota \in I}$, $\kappa_\iota:Y_\iota \stackrel{\simeq}\to \widetilde Y_\iota \subset \R^n$, be an atlas for $M$,  $\mklm{f_\iota}$ a corresponding partition of unity, and $\mklm{\bar f_\iota}$ a set of test functions with compact support in $Y_\iota$ satisfying $\bar f_\iota \equiv 1$ on $\supp f_\iota$. 
Consider further a test function  $0 \leq \alpha \in \CT(1/2, 3/2)$  such that $\alpha \equiv 1$ in a neighborhood of $1$, and set
\begin{align}
\begin{split}
\label{eq:02.05.2015}
 I^\gamma_\iota(\mu, R, s, x,y):= &   \int _G  \int_{\Sigma^{R,s}_{\iota,x}} e^{i{ \mu}  \Phi_{\iota,x,y}(\omega,g)} \hat \rho(s)  \overline{\gamma(g)} f_\iota( x) \\ &\cdot     a_\iota(s, \kappa_\iota(  x) , \mu \omega)  \bar f _\iota (g \cdot y)   \alpha(q( x, \omega)) J_\iota(g,y) {\d\Sigma^{R,s}_{\iota,x}(\omega) \d g},
 \end{split}
\end{align}
where $\Phi_{\iota,x,y}(\omega,g):=\eklm{\kappa_\iota(  x) - \kappa_\iota(g \cdot  y),\omega}$,  $a_\iota \in S^0_{\mathrm{phg}}$ is a suitable classical polyhomogeneous symbol satisfying $a_\iota(0,\tilde x, \eta)=1$, $J_\iota(g,y)$ a Jacobian, and 
\bq
\label{eq:20.04.2015}
\Sigma^{R,s}_{\iota,x}:=\mklm{\omega \in \R^n \mid  \zeta_\iota (s, \kappa_\iota (x),\omega) = R}
\eq
is a smooth compact hypersurface given in terms of  a smooth function  $\zeta_\iota$  which is  homogeneous in $\eta$ of degree $1$ and satisfies $\zeta_\iota(0, \tilde x, \eta) = q(\kappa_\iota^{-1}(\tilde x), \eta)$. %
Then, by \cite[Corollary 2.2]{ramacher16} one has for $\mu \geq 1$,  $x,y \in M$, and each $\tilde N \in \N$  the asymptotic expansion 
\begin{align}
\label{eq:13.06.2016}
K_{\widetilde \chi_\mu \circ \Pi_\gamma}(x,y)
= &\Big(\frac{\mu}{2\pi}\Big )^{n-1}  \frac{d_\gamma}{2\pi} \sum _\iota \Big [   \sum_{j=0}^{\tilde N-1} D^{2j}_{R,s} I^\gamma_\iota(\mu, R, s,x,y)_{|(R,s)=(1,0)} \, \mu^{-j} +  \mathcal{R}^\gamma_{\iota}(\mu,x,y) \Big ]
\end{align}
up to terms
of order $O(|\mu|^{-\infty}\norm{\gamma}_\infty)$ which are uniform in $x,y$, where $D^{2j}_{R,s}$ are known differential operators of order $2j$ in $R,s$, and 
\begin{align*}
|\mathcal{R}^\gamma_{\iota}(\mu,x,y)| \leq& C\mu^{-\tilde N} \sum_{|\beta| \leq 2\tilde N +3} \sup_{R,s} \big |\gd_{R,s}^\beta  I^\gamma_\iota(\mu,R,s,x,y)  \big |
\end{align*}
for some constant $C>0$. On the other hand,  $K_{\widetilde \chi_\mu \circ \Pi_\gamma}(x,y)$ is rapidly decaying as  $\mu \to -\infty$ and uniformly bounded in $x,y$ by $\norm{\gamma}_\infty$.

\section{Equivariant asymptotics of oscillatory integrals}

Let the notation be as in the previous section. As we have seen there, the question of describing the spectral function  in the equivariant setting reduces to the study of   oscillatory integrals of the form 
\bq
\label{eq:03.05.2015}
I^\gamma_{x,y}(\mu):=\int_{G}\int_{\Sigma^{R,s}_x} e^{i\mu \Phi_{x,y}(\omega,g)} \overline{\gamma(g)} a (x,y, \omega,g) \d \Sigma^{R,s}_x (\omega) \d g, \qquad  \mu \to + \infty,
\eq
with $\Sigma^{R,s}_x$ as in \eqref{eq:20.04.2015} and phase function 
\bqn
\Phi_{x,y}(\omega,g):=  \eklm{\kappa(x) - \kappa( g\cdot y), \omega},
\eqn 
where we have skipped the index $\iota$ for simplicity of notation, and  $a \in \CT$ is an amplitude that might depend on $\mu$  and other parameters such that $(x,y,\omega, g) \in \supp a$ implies $x, g \cdot y \in Y$. In what follows, we shall  write $^yG:=\mklm{g \in G \mid g\cdot y \in Y}$, as well as  
 \bq
 \label{eq:11.9.2017} 
 I^\gamma_x(\mu) :=I^\gamma_{x,x}(\mu), \qquad \Phi_x:=\Phi_{x,x}.
 \eq
Let us assume in the following that $G$ is a continuous group, and write  $\kappa(x)=(\tilde x_1, \dots, \tilde x_n)$ so that the canonical local trivialization of $T^\ast Y$ reads
\bqn 
Y \times \R^n \, \ni (x,\eta) \quad \equiv \quad \sum_{k=1}^n \eta_k (d\tilde x_k)_{x} \in \, T^\ast_xY.
\eqn
With respect to this trivialization, we shall identify  $\Sigma^{R,s}_{x'}$ with a subset in $T^\ast_{x} Y$ for eventually different $x$ and $x'$, if convenient. 
Let $\Omega:=\Jbb^{-1}(\mklm{0})$ be the zero level set of the momentum map $\Jbb: T^\ast M \to \g^\ast$ of the underlying Hamiltonian $G$-action on $T^\ast M$. 
 Let $\O_x:=G\cdot x$ denote the $G$-orbit and $G_x:=\mklm{g \in G\mid g\cdot x=x}$ the stabilizer or isotropy group of a point  $x\in M$. Throughout the paper, it is assumed that 
\bqn 
 \dim \O_x \leq n-1 \qquad \text{for all } x \in M.
\eqn
Let further $N_y\O_x$ be the normal space to the orbit $\O_x$ at a point $y \in \O_x$, which can be identified with  $\mathrm{Ann}(T_y\O_x)$ via the underlying Riemannian metric. For $x\in Y$ and $\O_y \cap Y \not=\emptyset$ let
\bqn 
\Crit \, \Phi_{x,y}:=\Big \{(\omega,g) \in \Sigma^{R,s}_x  \times \, ^y G\mid \, d(\Phi_{x,y})_{(\omega,g)}=0\Big \}
\eqn
be the critical set of $\Phi_{x,y}$. With $M_\text{prin}$, $M_\text{except}$, and $M_\text{sing}$ denoting the principal, exceptional, and singular stratum, respectively, it was shown in \cite[Lemma 3.1]{ramacher16} that  
\begin{itemize}
\item if  $y\in \mathcal{O}_x$, the set $\Crit \, \Phi_{x,y}$ is clean and given by the smooth submanifold
\bqn
\J=\big \{(\omega,g)\mid (g \cdot y, \omega) \in \Omega,  \,  x=g\cdot y\big \}= V_\J \times G_\J
\eqn
of  codimension $2\dim \O_x$, with $V_\J=\Sigma^{R,s}_x \cap N_x\O_x$ and $G_\J=\mklm{g \in G \mid x=g\cdot y}\subset \, ^y G$.

\item if $y \not\in  \mathcal{O}_x$, 
$$\Crit \, \Phi_{x,y}=\Big \{(\omega,g)\mid (g \cdot y,\omega) \in \Omega, \,  \kappa(x)-\kappa(g \cdot y) \in N_\omega \Sigma^{R,s}_x\Big \};$$
furthermore, assume that $G$ acts on $M$ with orbits of the same dimension $\kappa$, that is, $M=M_\mathrm{prin}\,  \cup\,  M_\mathrm{except}$, and that the co-spheres $S_x^\ast M$ are strictly convex. Then, either $\Crit \, \Phi_{x,y}$ is empty, or,  choosing $Y$ sufficiently small,  $\Crit \, \Phi_{x,y}$ is clean and of codimension $n-1+\kappa$, its finitely many connected components being of the form 
\bqn 
\J=V_\J \times G_\J
\eqn
with  $V_\J= \mklm{\omega_\J}$ and $G_\J = g_\J \cdot G_y\subset \, ^y G$ for some $\omega_\J\in \Sigma^{R,s}_x$ and $g_\J \in G$.  
\end{itemize}
 From this an asymptotic expansion for the integrals $I^\gamma_{x,y}(\mu)$ was deduced in \cite[Theorem 3.3]{ramacher16}, yielding a corresponding asymptotic formula for $K_{\widetilde \chi_\mu \circ \Pi_\gamma}(x,y)$. In this paper, we improve the estimate for the remainder in the  isotypic aspect in case that $G=T$ is a torus, which we assume from now on. 
 
  For this, recall that the exponential function $\exp$ is a covering homomorphism of $\t$ onto $T$, and its kernel $L$ a lattice in $\t$. Let $\widehat T$ denote the  \emph{set of characters of $T$}, that is, of all continuous homomorphisms of $T$ into the circle, which we identify with the unitary dual of $T$.  The differential of a character $\gamma: T \to S^1$, denoted by the same letter, is a linear form  $\gamma:\t\to i \R$ which is \emph{integral} in the sense that $\gamma(L) \subset 2\pi i \, \Z$. On the other hand, if $\gamma$ is an integral linear form, one defines
\bqn 
t^\gamma= e^{\gamma(X)}, \qquad t= \exp X \in T,
\eqn
setting up an identification of $\widehat T$ with the integral linear forms on $\t$ via $\gamma(t)\equiv t^\gamma$. Further, all irreducible representations of $T$ are $1$-dimensional. We now make the following 

\begin{definition}
Denote by $\widehat T'\subset \widehat T$ the subset of representations occuring in the decomposition \eqref{eq:PW} of $\L^2(M)$, and let $\mklm{\mathcal{V}_\mu}_{\mu \in (0,\infty)}$ be a family of finite subsets $\mathcal{V}_\mu\subset \widehat T'$ such that 
\bqn 
\max_{\gamma \in \mathcal{V}_\mu} |\gamma| \leq C \frac\mu{\log \mu}
\eqn
for a constant $C>0$ independent of $\mu$. 
\end{definition}

Our main result is the following improvement of the remainder and coefficient estimates in \cite[Theorem 3.3]{ramacher16}.

\begin{thm}
\label{thm:12.05.2015} 
Assume that $T$ is a torus acting on $M$ with orbits of dimension less or equal $n-1$, and let $ \mathcal{V}_\mu$ be as in the previous definition. 

 \begin{enumerate}
 \item[(a)] Let $y \in \mathcal{O}_x$. Then, for every $\gamma \in \widehat T$ and $\tilde N=0,1,2,\dots $ one has the asymptotic formula
\bqn
  I^\gamma_{x,y}(\mu)=(2\pi/\mu)^{\dim \mathcal{O}_x} \left [\sum_{k=0}^{\tilde N-1} \mathcal{Q}_{k}(x,y) \mu^{-k}  +\mathcal{R}_{\tilde N}(x,y,\mu)\right ],  \qquad \mu \to +\infty,
\eqn
where the  coefficients and the  remainder depend smoothly on $R$ and $s$. The coefficients satisfy the bounds
\begin{align*}
|\mathcal{Q}_k(x,y)|&\leq  C_{k,\Phi_{x,y}} \vol (\supp a(x,y,\cdot,\cdot)\cap \mathcal{C}_{x,y}) \sup _{l\leq k} \norm{(D_\omega^{2l} D_t^l \gamma a)(x,y, \cdot,\cdot)}_{\infty}
\end{align*}
while the remainder satisfies
\begin{align*}
|\mathcal{R}_{\tilde N}(x,y,   \mu) | &\leq \widetilde C_{\tilde N,\Phi_{x,y}}  \vol (\supp a  (x,y,\cdot,\cdot ))  \\ & \cdot \sup_{l\leq  2\tilde N+ \dim \mathcal{O}_x +1} \norm{(D_\omega^l D_t^l  a)(x,y,\cdot,\cdot )}_{\infty} \,  \sup_{l\leq  \tilde N} \norm{ D_t^l  \gamma}_{\infty}  \mu^{-\tilde N}, \qquad \gamma \in \mathcal{V}_\mu.
\end{align*}
The bounds are uniform in $R,s$ for suitable constants $C_{k,\Phi_{x,y}}>0$ and  $\widetilde C_{\tilde N,\Phi_{x,y}}>0$, where $D_\omega^l$ and $D_t^l$ denote  differential operators of order $l$ on $\Sigma^{R,s}_x$ and $T$, respectively. As functions in $x$ and $y$, $\mathcal{Q}_k(x,y)$ and $\mathcal{R}_{\tilde N}(x,y,\mu)$ are smooth on  $Y \cap M_\mathrm{prin}$, and the constants $C_{k,\Phi_{x,y}}$ and  $\widetilde C_{\tilde N,\Phi_{x,y}}$ are uniformly bounded in $x$ and $y$ if  $M= M_\mathrm{prin} \cup M_\mathrm{except}$.

\item[(b)] Let $y \not \in \mathcal{O}_x$. Assume that  $M=M_\mathrm{prin}\,  \cup\,  M_\mathrm{except}$ and that the co-spheres $S_x^\ast M$ are strictly convex.  Then, for sufficiently small $Y$ and every $\tilde N=0,1,2,\dots $   one has the asymptotic formula
\bqn
  I^\gamma_{x,y}(\mu)=\sum_{\J \in \pi_0(\Crit \, \Phi_{x,y})}(2\pi/\mu)^{\frac{n-1+\kappa}{2}} e^{i\mu \,^0\Phi_{x,y}^\J} \left [  \sum_{k=0}^{\tilde N-1} \mathcal{Q}_{\J,k}(x,y) \mu^{-k}  +\mathcal{R}_{\J, \tilde N}(x,y,\mu)\right ]
\eqn
as $\mu \to +\infty$, where $\kappa:=\dim M/T$ and  $^0\Phi_{x,y}^\J$ stands for  the constant values of $\Phi_{x,y}$ on the connected components $\J$ of  its critical set. The coefficients $\mathcal{Q}_{\J,k}(x,y)$  and the remainder term $\mathcal{R}_{\J,\tilde N}(x,y,\mu) $ depend smoothly on $R,s$, and $x,y \in Y \cap M_\mathrm{prin}$. Furthermore, they satisfy bounds analogous to the ones in (a), where now  derivatives in $t$ up to order $2k$ and $2\tilde N$ can occur, and 
the constants $C_{k,\Phi_{x,y}}$ and  $\widetilde C_{\tilde N,\Phi_{x,y}}$ are no longer uniformly bounded, but satisfy
\begin{align*}
C_{k,\Phi_{x,y}}& \ll  \dist(y, \O_x)^{-(n-1-\kappa)/2 -k}, \qquad 
\widetilde C_{\tilde N,\Phi_{x,y}} \ll \dist(y, \O_x)^{-(n-1-\kappa)/2-\tilde N}.
\end{align*}
\end{enumerate}
\end{thm}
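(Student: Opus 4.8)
The plan is to exploit the one structural fact that sets a torus $T$ apart from a general compact group: its characters are honest exponentials. If $\gamma\in\widehat T$ corresponds to the integral linear form $\gamma:\t\to i\R$, then $\overline{\gamma(\exp X)}=e^{-\gamma(X)}$ is the exponential of a term that is \emph{linear} in $X$. Covering the part of ${}^yG$ carrying $\supp a$ by finitely many charts $g=g_0\exp X$ and absorbing each chart's Jacobian into the amplitude to form $\tilde a$, one has in each chart $e^{i\mu\Phi_{x,y}(\omega,g)}\,\overline{\gamma(g)}=\overline{\gamma(g_0)}\,e^{i\mu\Phi^{\gamma,\mu}_{x,y}(\omega,X)}$, where $\Phi^{\gamma,\mu}_{x,y}(\omega,X):=\Phi_{x,y}(\omega,g_0\exp X)+i\gamma(X)/\mu$ differs from the unperturbed phase only by the real, $X$-linear term $i\gamma(X)/\mu$, whose $C^\infty$-norm is $O(|\gamma|/\mu)=O(1/\log\mu)=o(1)$ for $\gamma\in\mathcal V_\mu$. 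The whole argument then consists in running the clean stationary phase expansion of \cite{ramacher16} (after H\"ormander \cite{hoermander68}) for the phase $\mu\Phi^{\gamma,\mu}_{x,y}$ against the amplitude $\tilde a$ \emph{alone}, so that $\gamma$ enters only through an arbitrarily small \emph{linear} deformation of a phase whose critical geometry is already described by Lemma 3.1.

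First I would dispose of the isotropy directions. By Lemma 3.1 the components of $\Crit\Phi_{x,y}$ are $\J=V_\J\times G_\J$ with $G_\J$ a coset of the isotropy group $T_x=T_y$ in case (a), resp.\ $g_\J\cdot T_y$ in case (b), and for fixed $\omega$ the phase $\Phi_{x,y}$ is constant along these isotropy directions. Splitting $\gamma=\gamma'+\gamma''$ into its restriction $\gamma'$ to $\t_x$ and a complementary part $\gamma''$, and integrating the isotropy variable out against the compact group $T_x$, one obtains a Fourier coefficient of the smooth, $\mu$-uniformly bounded amplitude against $\overline{\gamma'}$, which is $O_N((1+|\gamma'|)^{-N}\norm{\tilde a}_{C^N})$, the finite sum over the components of $T_x$ supplying the usual multiplicity $[\pi_{\gamma|T_x}:\1]$. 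Hence, when $|\gamma'|$ is large the contribution is negligible, while when $|\gamma'|$ is bounded it merely replaces $\tilde a$ by a new amplitude with the same $C^N$-bounds \emph{at no $\gamma'$-derivative cost}; in either case one may henceforth assume that the remaining linear perturbation $i\gamma''(X^\perp)/\mu$ vanishes identically on the unperturbed critical manifold and that the remaining stationary phase is in the $\omega$-variable and in the orbit-transverse directions $X^\perp$ only.

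The clean critical structure then persists uniformly. Since $i\gamma''(X^\perp)/\mu$ is linear, it changes no derivative of $\Phi_{x,y}$ of order $\geq2$; since its $C^1$-norm is $o(1)$ and it vanishes on $\Crit\Phi_{x,y}$, the implicit function theorem applied to $d\Phi^{\gamma,\mu}_{x,y}=0$, using the nondegeneracy of the transverse Hessian that cleanness in Lemma 3.1 supplies, shows that $\Crit\Phi^{\gamma,\mu}_{x,y}$ is again a smooth submanifold of the same dimension, at $C^\infty$-distance $O(1/\log\mu)$ from $\Crit\Phi_{x,y}$, along which $\Phi^{\gamma,\mu}_{x,y}$ is clean with transverse Hessian a small perturbation of the old one. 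Applying the clean stationary phase theorem and reinstating the prefactor $\overline{\gamma(g_0)}$ yields an expansion of exactly the stated shape: with $(2\pi/\mu)^{\dim\O_x}$ in front in case (a), and $(2\pi/\mu)^{(n-1+\kappa)/2}e^{i\mu\,{}^0\Phi_{x,y}^{\J}}$ in case (b), the critical value of the now unperturbed phase coming out of the oscillatory factor; the coefficients are integrals over $\Crit\Phi^{\gamma,\mu}_{x,y}$ of order-$2k$ differential operators applied to $\tilde a$, built from the inverse transverse Hessian and the higher jets of $\Phi^{\gamma,\mu}_{x,y}$, and the $\tilde N$-th remainder is $O(\mu^{-\tilde N})$ times the supremum of at most $2\tilde N+\dim\O_x+1$ derivatives of $\tilde a$, with $C_{k,\Phi_{x,y}},\widetilde C_{\tilde N,\Phi_{x,y}}$ controlled \emph{uniformly} by the $C^\infty$-data of $\Phi_{x,y}$ because the deformation is $o(1)$ in $C^\infty$. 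Smoothness in $R,s$ and in $x,y\in Y\cap M_\mathrm{prin}$, and uniformity of the constants when $M=M_\mathrm{prin}\cup M_\mathrm{except}$, are inherited directly from the corresponding properties of $\Phi_{x,y}$ and $a$, exactly as in \cite[Theorem 3.3]{ramacher16}.

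It remains to rewrite the expansion in the form of the statement and to count derivatives. Taylor-expanding the $\gamma/\mu$-dependence of the perturbed critical point $z_c=z_c^0+O(|\gamma|/\mu)$ and of the perturbed transverse Hessian in finitely many powers of $\gamma/\mu$ and regrouping turns the perturbed expansion into a genuine $\mu^{-1}$-series: on the critical set the $\gamma$'s reassemble into $\overline{\gamma(g)}$, whence the products $D_\omega^{2l}D_t^l\gamma a$ in the coefficient bounds, and the Taylor remainders produce the explicit factors $D_t^l\gamma$; since each Taylor step costs at most one power of $\gamma/\mu$, only $l\leq k$ such factors occur in $\mathcal Q_k$ and only $l\leq\tilde N$ in $\mathcal R_{\tilde N}$, so the entire contribution of $\gamma$ to the remainder is $O(|\gamma|^{\tilde N}\mu^{-\tilde N})=O((\log\mu)^{-\tilde N})$. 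The precise derivative pattern in case (a) is forced by the Hessian: $\Phi_{x,y}$ is linear in $\omega$ and, for $y\in\O_x$, the covector $\kappa(x)-\kappa(g\cdot y)$ vanishes on the critical set, so the transverse $(\omega,\omega)$-block of its Hessian on $\Sigma_x^{R,s}$ vanishes there; hence the $(t,t)$-block of the inverse transverse Hessian vanishes, the stationary phase operator contains no pure $t$-derivative, and each of its $l$ applications spends at most one $t$-derivative against one $\omega$-derivative, besides up to $2l$ $\omega$-derivatives. In case (b) the factor $V_\J=\mklm{\omega_\J}$ is a point, strict convexity of $S_x^\ast M$ makes both the $\omega$- and the $g$-critical point genuinely nondegenerate, the $(t,t)$-block no longer vanishes and up to $2l$ $t$-derivatives occur; moreover, as $y\to\O_x$ the mixed Hessian block and the second fundamental form of $S_x^\ast M$ governing the $\omega$-Hessian degenerate (in $n-1-\kappa$ directions) while the effective support of $a$ stretches, and tracking these rates precisely — the one part of the argument unaffected by the torus hypothesis, hence taken over from \cite[Theorem 3.3]{ramacher16} — yields $C_{k,\Phi_{x,y}}\ll\dist(y,\O_x)^{-(n-1-\kappa)/2-k}$ and $\widetilde C_{\tilde N,\Phi_{x,y}}\ll\dist(y,\O_x)^{-(n-1-\kappa)/2-\tilde N}$. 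The main obstacle is the uniform persistence of the clean critical geometry under the $\mu$-dependent deformation, together with the bookkeeping guaranteeing that at most $\tilde N$ derivatives ever land on $\gamma$.
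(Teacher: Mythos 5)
Your proposal is correct and follows essentially the same route as the paper: you absorb the character into the phase as the small linear perturbation $\xi=\gamma/i\mu$, use that linearity leaves the transversal Hessian unchanged and (via the implicit function theorem) keeps the critical set clean inside a fixed tubular neighborhood for $\gamma\in\mathcal V_\mu$, apply non-stationary phase outside and a parameter-uniform stationary phase argument inside, and then re-identify the local terms with those of the unperturbed expansion to get the stated derivative counts on $\gamma$ — precisely the paper's strategy carried out via H\"ormander's Theorems 7.7.1/7.7.5 and Lemma 7.7.3 with $\xi$ as parameter. Your extra preliminary reduction (integrating out the isotropy direction to produce the rapid decay in $\gamma|_{\t_x}$) is not in the paper's proof but is harmless and consistent with it.
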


\begin{proof}
The asymptotic expansion for the integral $I^\gamma_{x,y}(\mu)$,  the smoothness of the coefficients $\mathcal{Q}_{k}(x,y)$, $\mathcal{Q}_{\J,k}(x,y)$, and the remainder terms  in the parameters $R,s$, and $x,y \in Y\cap M_\text{prin}$, as well as corresponding bounds for the coefficients and the remainder term were shown in \cite[Theorem 3.3]{ramacher16}. To improve on   the remainder estimate concerning its dependence on $\gamma$ as $\mu \to +\infty$, we rewrite  $I^\gamma_{x,y}(\mu)$  up to a volume factor as
\bqn
I^\gamma_{x,y}(\mu)\equiv \int_{\t}\int_{\Sigma^{R,s}_x} e^{i\mu \Phi_{x,y}(\omega,\exp (-X))} e^{-\gamma(X)} a (x,y, \omega,X) \d \Sigma^{R,s}_x (\omega) \d X, \qquad \gamma \in \widehat T,
\eqn
where we can assume that $a$ is compactly supported with respect to $X\in \t$ in a small open connected subset $^y\t\subset \t$   by choosing $Y$ small. 
If we were to apply the stationary and non-stationary phase principles to $I^\gamma_{x,y}(\mu)$ with  $\Phi_{x,y}$ as phase function, which was the way we followed in \cite{ramacher16}, this would involve derivatives of the amplitude $\overline \gamma a$ and generate non-optimal powers in $\gamma$ in the remainder estimate. Instead, note  that the character $\gamma(t)=e^{\gamma(X)}\in S^1$ constitutes  itself a phase, which can oscillate rather quickly as $\gamma$ increases. To deal with these oscillations, we shall  absorb them into the phase function, and define for arbitrary  $\xi \in \t^\ast$ 
\bqn 
 \Phi^\xi_{x,y}(\omega,X):=  \Phi_{x,y}(\omega,\e{-X})-\xi(X), \qquad t=\exp X \in T. 
\eqn
The idea is then to apply  the stationary and non-stationary phase principles to the integrals $I^\gamma_{x,y}(\mu)$ with phase function $\Phi^\xi_{x,y}(\omega,X)$ and $\xi=\gamma/i\mu$ as parameter, compare \cite[Theorem 7.7.6]{hoermanderI}, to obtain remainder estimates that are optimal in $\gamma\in \mathcal{V}_\mu$. 
If $\mklm{X_1,\dots,X_d}$ denotes a basis of $\t$, the $X$-derivatives of $ \Phi^\xi_{x,y}(\omega,X)$ read 
\bqn 
\sum_{k=1}^n \omega_k (d \tilde x_k)_{\e{-X} \cdot y} (\widetilde X_j) -\xi(X_j)=[\Jbb(\e{-X} \cdot y, \omega)-\xi](X_j),
\eqn
 so that 
 \bqn 
\Crit  \, \Phi^\xi_{x,y}=\mklm{(\omega,X) \mid \kappa(x) - \kappa (\e{-X} \cdot y)  \in N_\omega(\Sigma^{R,s}_x), \quad (\e{-X} \cdot y, \omega) \in \Jbb^{-1}(\mklm{\xi})}.
 \eqn
A repetition of the arguments given in \cite[Proof of Lemma 3.1]{ramacher16} then shows that for sufficiently small $|\xi|$ 
 \begin{itemize}
\item if  $y\in \mathcal{O}_x$, the set $\Crit \, \Phi^\xi_{x,y}$ is clean and given by the smooth submanifold
\bqn
\J=\big \{(\omega,X)\mid (\e{-X} \cdot y, \omega) \in \Jbb^{-1}(\mklm{\xi}),  \,  x=\e{-X}\cdot y\big \}
\eqn
of  codimension $2\dim \O_x$; 

\item if $y \not\in  \mathcal{O}_x$  and $T$ acts on $M$ with orbits of the same dimension $\kappa$ 
and the co-spheres $S_x^\ast M$ are strictly convex, then either $\Crit \, \Phi^\xi_{x,y}$ is empty, or,  choosing $Y$ sufficiently small,  $\Crit \, \Phi^\xi_{x,y}$ is clean and of codimension $n-1+\kappa$,
\end{itemize}
which would also just follow from \cite[Proof of Lemma 3.1]{ramacher16} and the implicit function theorem.
In addition, note that  for $(\omega,X)\in \Crit \, \Phi^\xi_{x,y}$ 
\bqn
\M_{x,y}(\omega,X):=\text{Trans Hess } \Phi^\xi_{x,y}(\omega,X) \, \text{is independent of $\xi$}.
\eqn
Next, notice that under the assumptions in (a) and (b), respectively, there is an open tubular neighborhood $U_0$  of $\Crit \, \Phi_{x,y}$ and  a constant $\mu_0>0$ such that for all $\mu \geq \mu_0$ and $\gamma \in \mathcal{V}_\mu$
\begin{itemize}
\item $\Crit \, \Phi_{x,y}^{\gamma / i\mu} \subset U_0$,
\item $\Crit \, \Phi_{x,y}^{\gamma/ i\mu}$ is clean, that is, $\Phi_{x,y}^{\gamma/i\mu} $ is a Morse-Bott function. 
\end{itemize}
Let  $U_1$ and $U_2$ be two further open tubular neighborhoods of  $\Crit \, \Phi_{x,y}$ and $\mu_0 >\mu_1 >\mu_2>0$  be such that $U \subset U_1 \subset U_2$ are proper inclusions and the pairs $(U_1,\mu_1)$, $(U_2,\mu_2)$ have the same properties than $(U_0,\mu_0)$. Let $u \in \Cinft(U_2,\R^+)$ be a test function with $u_{|U_1}\equiv 1$ and define
\begin{align*}
^1I^\gamma_{x,y}(\mu)&:= \int_{\t}\int_{\Sigma^{R,s}_x} e^{i\mu \Phi^{\gamma/i\mu}_{x,y}(\omega,X)}u(\omega,X) a(x,y, \omega,X) \d \Sigma^{R,s}_x (\omega) \d X, \\
^2I^\gamma_{x,y}(\mu)&:=I^\gamma_{x,y}(\mu)-^1I^\gamma_{x,y}(\mu).
\end{align*}
By construction,  for $\gamma \in\mathcal{V}_\mu$ and $\mu \geq \mu_0$ all critical sets $\Crit \, \Phi_{x,y}^{\gamma / i\mu}$  have a minimal, non-vanishing\footnote{ At least on the intersection of the support of $a(x,y,\cdot,\cdot)$ and $ U_1$.} distance to $\gd U_1$, so that 
\bqn 
|\grad \Phi^{\gamma/i\mu}_{x,y}| \geq C >0 \quad \text{on $\supp(1-u) a(x,y,\cdot,\cdot )$ for all $\gamma \in\mathcal{V}_\mu$ with $\mu \geq \mu_0$.}
\eqn
An application of the non-stationary phase principle  \cite[Theorem 7.7.1]{hoermanderI} with respect to the phase function $ \Phi^{\gamma/i\mu}_{x,y}$ then yields for every $k \in \N$ the uniform bound
\bqn 
^2I^\gamma_{x,y}(\mu) =O_{k,a}(\mu^{-k}) \qquad \text{for all $\gamma \in\mathcal{V}_\mu$ with $\mu \geq \mu_0$.}
\eqn
It remains to estimate the integral $^1I^\gamma_{x,y}(\mu)$ by means of the stationary phase principle with $\xi=\gamma/i\mu$ as parameter, for which we shall follow \cite[Theorem 7.7.5]{hoermanderI} and its proof.  Assume as we may that $U_2$ is sufficiently small, and introduce normal tubular coordinates on $U_2$ in form of an atlas $\mklm{(\zeta_\iota,\mathcal{Y}_\iota)}_{\iota \in I}$ such that
 \begin{enumerate}
 \item $\supp ua(x,y,\cdot,\cdot ) \subset \bigcup_\iota \mathcal Y_\iota$,
 \item $\zeta_\iota^{-1}(m',m'') \in \Crit \, \Phi^\xi_{x,y}$ iff $\R^{\d''}\ni m''=m''_\xi$, where 
\bqn 
d''=\begin{cases} 2 \dim \O_x & \text{in case (a)} \\ n-1+\kappa & \text{in case (b)}. \end{cases}
\eqn
 \item the $\t$-coordinates  are given by standard Euclidean coordinates, so that in each chart 
 $$X=\sum_\alpha  m'_{\t,\alpha} X_\alpha ' + \sum_\beta m''_{\t,\beta} X_\beta ''$$
  for a suitable basis $\{X_\alpha',X_\beta''\}$ of $\t$. 
 \end{enumerate}
 Let $\mklm{p_\iota}$ be a partition of unity subordinated to the covering $\mklm{\mathcal{Y}_\iota}$, and write $  a_\iota(x,y,\omega,X):= p_\iota(\omega,X)  a(x,y,\omega,X) $  as well as  $a_\iota (x,y,m):=a_\iota (x,y, \zeta_\iota^{-1}(m))\beta_\iota(m) $, $\beta_\iota$ being a Jacobian.  Denote the product of  $u \circ \zeta^{-1}_\iota$ with the Taylor expansion of $a_\iota(x,y,\cdot )$ in the variable $m''$ at the point $m''_\xi$ of order $2k$ by $T^\xi_\iota(x,y,m)$, which is smooth and bounded in $\xi$.  Let $\M_{x,y}(\omega,X)$ be as above and set $\M^\iota_{x,y}(m',m''_\xi):=(\M_{x,y}  \circ \zeta_\iota^{-1})(m',m''_\xi)$. Since for sufficiently small $|m''-m''_\xi|$
 \bqn 
 \frac{|m''-m''_\xi|}{|\grad_{m''} \Phi^\xi_{x,y} (m',m''_
 \xi)|} \ll \norm{ \M^\iota_{x,y}(m',m''_\xi)^{-1}} \ll 1
 \eqn
for all $\xi$, \cite[Theorem 7.7.1]{hoermanderI} yields with respect to $\Phi^\xi_{x,y}(m):=(\Phi^\xi_{x,y}\circ \zeta_\iota^{-1})(m)$ for any $k \in \N$
 \bqn
 ^1I^\gamma_{x,y}(\mu)= \sum_\iota \int_{\R^{d'}} \int_{\R^{d''}} e^{i\mu \Phi^{i\gamma/\mu}_{x,y}(m)} T^{i\gamma/\mu}_\iota(x,y,m) \d m'' \d m' +O_{k,a}(\mu^{-k})
 \eqn
 uniformly in $\gamma$. Next, note that for fixed $m'$
 \bq
 \label{eq:quadrform}
m'' \longmapsto \eklm{\M^\iota_{x,y}(m',m''_\xi) (m''-m''_\xi),(m''-m''_\xi)}
 \eq
 defines a non-degenerate quadratic form, and introduce the auxiliary function 
 \bqn
 H^\xi(m):=\Phi^\xi_{x,y}(m)-\Phi^\xi_{x,y}(m',m''_\xi)-\eklm{\M^\iota_{x,y}(m',m''_\xi) (m''-m''_\xi),(m''-m''_\xi)}/2,
 \eqn
 which vanishes of third order at $m''=m''_\xi$. The function
  \bqn
 ^s\Phi^\xi_{x,y}(m):=\eklm{\M^\iota_{x,y}(m',m''_\xi) (m''-m''_\xi),(m''-m''_\xi)}/2+s H^\xi(m) 
 \eqn
 interpolates between $\Phi^\xi_{x,y}(m)-\Phi^\xi_{x,y}(m',m''_\xi)=\, ^1\Phi^\xi_{x,y}(m)$ and  the quadratic form \eqref{eq:quadrform}, and we define
 \bqn 
 \I(s):=\int_{\R^{d''}} e^{i\mu \, ^s\Phi^{\xi}_{x,y}(m)} T^\xi_\iota(x,y,m) \d m''.
 \eqn
 Taylor expansion then yields
 \bqn 
 \Big | \I(1)- \sum_{l=0}^{2k-1} \I^{(l)} (0)/l! \Big | \ll \sup_{0 \leq s \leq 1} |\I^{(2k)} (s)|/L!.
 \eqn
 Now, differentiation with respect to $s$ gives 
 \bqn
 \I^{(l)}(s)= \int_{\R^{d''}} e^{i\mu \, ^s\Phi^{\xi}_{x,y}(m)} (i\mu H^\xi(m))^l\, T^\xi_\iota(x,y,m) \d m''.
 \eqn
  In view of the uniform bounds
  \bqn 
 \frac{|m''-m''_\xi|}{|\grad_{m''} \, ^s\Phi^\xi_{x,y} (m',m''_\xi)|} \ll \norm{ \M^\iota_{x,y}(m',m''_\xi)^{-1}} \ll 1 \qquad \text{for all $\xi$ and $s$}
 \eqn
and\footnote{Note that $D^\alpha_{m''} H^\xi(m)=D^\alpha_{m''} \Phi_{x,y}(m)$ for $|\alpha|\geq 3$, while for $|\alpha| \leq 2$ Taylor expansion at $m''_\xi$ implies
\begin{align*}
|D^\alpha_{m''} H^\xi(m)|  \ll |m''-m''_\xi|^{3-|\alpha|} \sum_{|\beta|=3} \sup |D^\beta_{m''} \Phi_{x,y}(m)| \ll  |m''-m''_\xi|^{3-|\alpha|} 
\end{align*}
 uniformly in $\xi$ since $H^\xi(m)$ depends on $\xi$ only via the term $\xi\big (\sum_\alpha  m'_{\t,\alpha} X_\alpha ' + \sum_\beta m''_{\t,\beta} X_\beta ''\big )$, which vanishes when differentiated more than one time.} 
\bqn
\big |D^\alpha_{m''}  [H^\xi(m)^{2k}\, T^\xi_\iota(x,y,m)]\big | \ll |m''-m''_\xi|^{6k-|\alpha|} \quad \text{for all $\xi$} 
\eqn
we obtain from \cite[Theorem 7.7.1]{hoermanderI} with $k$ replaced by $3k$ there the important uniform bound
 \bqn
 \I^{(2k)}(s) =O(\mu^{-k}) \qquad \text{for all $\gamma \in \mathcal{V}_\mu$ with $\mu \geq \mu_0$ and  all $s$.}
 \eqn
 Next, denote by $\H^\xi(m)$ the Taylor expansion of $H^\xi(m)$ of order $3k$, and notice that one has
 \bqn 
 (H^\xi)^l-(\H^\xi)^l=O(|m''-m''_\xi|^{2k+2l})
 \eqn
 uniformly in $\xi$. Applying again \cite[Theorem 7.7.1]{hoermanderI} gives
 \bqn
 \I^{(l)}(0)= \int_{\R^{d''}} e^{i\mu \, ^0\Phi^{\xi}_{x,y}(m)} (i\mu \H^\xi(m))^l\, T^\xi_\iota(x,y,m) \d m'' + O_{k,a}(\mu^{-k})
 \eqn
 uniformly in $\xi$. The assertion now follows by taking into account \cite[Lemma 7.7.3]{hoermanderI} and the final arguments in the proof of  \cite[Theorem 7.7.5]{hoermanderI}.
Note that the Taylor expansion $\H^\xi$ starts with terms of degree $3$ and depends on $\xi$ in that the coefficients are evaluated at $m''=m''_{\xi}$. Consequently, when applied to $\I^{(l)}(0)$ the remainder estimate in \cite[Lemma 7.7.3]{hoermanderI} can be uniformly estimated in $\xi$.  The final remainder estimate results from the above uniform estimates, and  local contributions of higher order where additional derivatives of $\gamma$ arise. The local terms are unique, and coincide with the ones with phase function $\Phi_{x,y}$ and amplitude $\overline \gamma a$ considered in  \cite[Theorem 3.3]{ramacher16}, from which the corresponding bounds are deduced. The fact that in case (a) only $t$-derivatives of order $k$ and $\tilde N$ occur,  follows from the particular form of the transversal Hessian, \cite[Proof of Theorem 3.3]{ramacher16}. 
 \end{proof}

Similarly, one derives 

\begin{thm}
\label{thm:14.05.2017} 
Consider the integrals $I^\gamma_{x,y}(\mu)$  defined in \eqref{eq:03.05.2015}. Assume that the torus $T$ acts on $M$ with orbits of the same dimension $\kappa \leq n-1$,  and that the co-spheres $S_x^\ast M$ are strictly convex.  Then, for sufficiently small $Y$ and arbitrary $\tilde N_1, \tilde N_2 \in \N$   one has the asymptotic formula
\begin{gather*}
 I^\gamma_{x,y}(\mu)\\
 = \sum_{\J \in \pi_0(\Crit \, \Phi_{x,y})}   \frac{e^{i \mu  \,^0\Phi_{ x,y}^\J}}{\mu^\kappa (\mu \norm{ \kappa(x)-\kappa(g_\J \cdot y)}+1 )^{\frac{n-1-\kappa}2}} \left [ \sum_{k_1,k_2 =0}^{\tilde N_1-1,\tilde N_2-1}  \frac {   \mathcal{Q}_{\J, k_1,k_2} (x,y)}{\mu^{k_1} (\mu \norm{ \kappa(x)-\kappa(g_\J \cdot y)}+1)^{k_2}}\right. \\  \left. + \mathcal{R}_{\J,\tilde N_1, \tilde N_2}(x,y,\mu) \right ]
\end{gather*}
as $\mu \to +\infty$.  The coefficients  and the remainder term  depend smoothly on $R,t$, 
 while  $^0\Phi_{x,y}^\J:= R \,  c_{x,g_\J\cdot y} (t)$ denotes the constant value of $\Phi_{x,y}$ on $\J$. Furthermore, the coefficients are uniformly bounded in $R,s, x$, and $y$ by derivatives of $\gamma$   up to order $2k_1$,
 and the remainder term
 \bqn 
 \mathcal{R}_{\J,\tilde N_1, \tilde N_2}(x,y,\mu)=  O_{\J,\tilde N_1, \tilde N_2} \Big (   \mu^{-\tilde N_1}  (\mu \norm{ \kappa(x)-\kappa(g_\J \cdot y)}+1)^{-\tilde N_2} \Big ) 
 \eqn
 by derivatives of $\gamma$ up to order  $2 \tilde N_1$, provided that $\gamma \in \mathcal{V}_\mu$.
\end{thm}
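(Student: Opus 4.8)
The plan is to repeat the proof of Theorem~\ref{thm:12.05.2015}(b) almost verbatim — absorbing the character $\e{-\gamma(X)}$ into the phase through the parameter $\xi=\gamma/i\mu$ — but to retain, near each connected component of the critical set, the finer \emph{two-scale} form of the stationary-phase expansion, the second scale being the distance $r:=\norm{\kappa(x)-\kappa(g_\J\cdot y)}\approx\dist(y,\O_x)$ at which the diagonal is approached. This two-scale expansion (for the phase $\Phi_{x,y}$ and amplitude $\overline\gamma\,a$) is precisely the one underlying the near-diagonal estimates used for the equivariant $\L^p$-bounds in \cite{ramacher16}, and the only new point is to run it with $\xi$ as auxiliary parameter so that the $\gamma$-dependence of the remainder stays optimal. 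Concretely, one writes $I^\gamma_{x,y}(\mu)$, up to a volume factor, as $\int_\t\int_{\Sigma^{R,s}_x}e^{i\mu\Phi^{\gamma/i\mu}_{x,y}(\omega,X)}a(x,y,\omega,X)\,\d\Sigma^{R,s}_x(\omega)\,\d X$ with $a$ compactly supported in $X$. As in the proof of Theorem~\ref{thm:12.05.2015}, for $\gamma\in\mathcal{V}_\mu$ and $\mu\ge\mu_0$ the set $\Crit\Phi^{\gamma/i\mu}_{x,y}$ lies in a fixed tubular neighbourhood of $\Crit\Phi_{x,y}$, is clean of codimension $n-1+\kappa$ with components $\mklm{\omega^\xi_\J}\times g^\xi_\J G_y$ close to those of $\Crit\Phi_{x,y}$, and has a transversal Hessian $\M_{x,y}$ independent of $\xi$; cutting off with a test function $u$ supported near $\Crit\Phi_{x,y}$, on $\supp(1-u)a$ one has $\modulus{\grad\Phi^{\gamma/i\mu}_{x,y}}\gtrsim 1$ uniformly, so by the non-stationary phase principle \cite[Theorem 7.7.1]{hoermanderI} that piece is $O_{k,a}(\mu^{-k})$ uniformly in $\gamma\in\mathcal{V}_\mu$, exactly as before.

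For the localized piece one passes, near a component $\J$, to the normal tubular coordinates $m=(m',m'')$ of the proof of Theorem~\ref{thm:12.05.2015} — $m'$ running along the $\dim G_y$-dimensional critical manifold and $m''\in\R^{n-1+\kappa}$ transversal — and further splits $m''=(m''_a,m''_b)$ into a block of $2\kappa$ directions along which the transversal Hessian $\M_{x,y}$ is bounded above and below uniformly in $r$, and the remaining $n-1-\kappa$ directions along which it is comparable to $r$. This block structure, forced by the geometry of $\Omega=\Jbb^{-1}(\mklm{0})$ and the strict convexity of $S^\ast_xM$ — in the limit $r\to 0$ the critical set of case~(b) opens up into the larger one of case~(a) along exactly these $n-1-\kappa$ directions — is the geometric heart of the argument. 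After the anisotropic rescaling $m''_b\mapsto r^{1/2}\widetilde m''_b$ the transversal Hessian in the $\widetilde m''_b$-block is nondegenerate with effective large parameter $\mu r$, and one applies the $\xi$-robust stationary-phase machinery of \cite[Theorem 7.7.1, Lemma 7.7.3, Theorem 7.7.5]{hoermanderI} twice: in $m''_a$ with large parameter $\mu$, producing the sum over $k_1$ with remainder $O(\mu^{-\tilde N_1})$, and in $\widetilde m''_b$ with large parameter $\mu r+1$ (the $+1$ keeping everything uniform as $r\to 0$), producing the sum over $k_2$ with remainder $O((\mu r+1)^{-\tilde N_2})$; the combined leading prefactor is $\mu^{-\kappa}(\mu r+1)^{-(n-1-\kappa)/2}e^{i\mu\,^0\Phi_{x,y}^\J}$ with $^0\Phi_{x,y}^\J$ the value of $\Phi_{x,y}$ on the Morse--Bott component $\J$, while the $m'$-integration over $G_y$ contributes only smooth, bounded coefficient factors, just as in Theorem~\ref{thm:12.05.2015}(b). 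Since the relevant stationary-phase coefficients and remainders are universal polynomial expressions in finitely many derivatives of phase and amplitude evaluated at the critical points, they depend smoothly on $R,s$ and on $x,y\in Y\cap M_{\mathrm{prin}}$.

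As in the proof of Theorem~\ref{thm:12.05.2015}, the interpolation function $H^\xi$ between $\Phi^\xi_{x,y}$ and its transversal quadratic model, and its Taylor polynomial $\H^\xi$, depend on $\xi$ only through the term linear in $X$, which is annihilated by two differentiations; consequently all the remainder estimates of \cite[Theorem 7.7.1, Lemma 7.7.3]{hoermanderI} are uniform in $\xi=\gamma/i\mu$, which yields the clause $\gamma\in\mathcal{V}_\mu$, and the only derivatives of $\overline\gamma$ that occur are those of order $\le 2k_1$ in the coefficients and $\le 2\tilde N_1$ in the remainder, arising as the local terms with phase $\Phi_{x,y}$ and amplitude $\overline\gamma\,a$, from which the stated bounds are read off. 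The main obstacle is the geometric step of the preceding paragraph: producing the block decomposition of the transversal Hessian into an $r$-uniformly nondegenerate $2\kappa$-block and an $r$-proportional $(n-1-\kappa)$-block, uniformly in $x,y,R,s$ and $\xi$, and verifying that after the anisotropic rescaling $m''_b\mapsto r^{1/2}\widetilde m''_b$ the hypotheses of the cited theorems — uniform lower bounds on Hessian eigenvalues, third-order vanishing of $H^\xi$ in the rescaled variables, cleanness of $\Crit\Phi^\xi_{x,y}$ — all persist uniformly as $r\to 0$. Everything else is the bookkeeping of the proof of Theorem~\ref{thm:12.05.2015} with the single scale $\mu$ replaced by the pair $(\mu,\mu r+1)$.
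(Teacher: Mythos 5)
Your proposal takes essentially the same route as the paper: the paper's own proof is a one-line reference, namely to rerun the two-scale near-diagonal expansion of \cite[Theorem 3.4]{ramacher16} while making the remainder uniform in $\gamma\in\mathcal{V}_\mu$ via the phase-absorption argument with parameter $\xi=\gamma/i\mu$ from Theorem \ref{thm:12.05.2015}, which is exactly what you do. Your extra sketch of the block structure of the transversal Hessian and the anisotropic rescaling in the $n-1-\kappa$ directions is a plausible and dimensionally consistent account of the mechanism behind the cited two-scale expansion, so there is no gap to flag.
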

\begin{proof} 
The proof is essentially the same than the one of \cite[Theorem 3.4]{ramacher16}, using the arguments given in the proof of the previous theorem.
\end{proof}

\section{The equivariant local Weyl law}

We shall now prove  an improved version of the equivariant local Weyl derived in \cite{ramacher16}. For this, we first prove   the following refinement of \cite[Proposition 4.1]{ramacher16}.

\begin{proposition} [\bf Point-wise  asymptotics for the kernel of the equivariant approximate projection]
\label{thm:kernelasymp}
For any  fixed $x \in M$, $\gamma \in \widehat T$,  and $\tilde N\in \N$ one has as $\mu \to +\infty$
\begin{align}
\label{eq:13.05.2015}
\begin{split}
K_{\widetilde \chi_\mu \circ \Pi_\gamma}(x,x)&=\sum_{j\geq 0, \, e_j \in \L^2_\gamma(M)} \rho(\mu-\mu_j) |{e_j(x)}|^2 \\ & = \Big (\frac{\mu}{2\pi}\Big )^{n-\dim \mathcal{O}_x-1} \frac{d_\gamma}{2\pi} \left [\sum_{k=0}^{\tilde N-1} \Lcal_k(x,\gamma) \mu^{-k}+ \mathcal{R}_{\tilde N}(x,\gamma) \right ]
\end{split}
\end{align}
with coefficients  and  remainder  depending smoothly  on $x \in M_\mathrm{prin}$. They  satisfy the bounds
\bqn 
|\mathcal{L}_k(x,\gamma)| \leq C_{k,x} \sup_{l \leq k} \norm{D^l \gamma}_\infty, 
\eqn
as well as 
\bqn
|\mathcal{R}_{\tilde N}(x,\gamma)| \leq \tilde C_{\tilde N,x}  \sup_{l \leq \tilde N} \norm{D^l \gamma}_\infty \mu^{-\tilde N}, \qquad \gamma \in \mathcal{V}_\mu,
\eqn
where $D^l$ denotes a differential operator on $T$ of order $l$, and the constants $C_{k,x}$, $\tilde C_{\tilde N,x}$ are uniformly bounded in $x$ if $M= M_\mathrm{prin} \cup M_\mathrm{except}$. In particular, the leading coefficient is given by
\begin{align*}
\Lcal_0(x,\gamma) =  \hat \rho(0) [{\pi_\gamma}_{|T_x}:\1] \, \mbox{vol} \, [( \Omega \cap S_x^\ast M)/T],
\end{align*}
where $S^\ast M:=\mklm{(x,\xi) \in T^\ast M\mid p(x,\xi)=1}$. If $\mu \to -\infty$, the function $K_{\widetilde \chi_\mu \circ \Pi_\gamma}(x,x)$ is rapidly decreasing in $\mu$. 
\end{proposition}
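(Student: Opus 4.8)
The plan is to reduce \eqref{eq:13.05.2015} to the oscillatory-integral asymptotics of Theorem~\ref{thm:12.05.2015}(a), exploiting that we are evaluating the kernel on the diagonal $y=x$, so that $y\in\O_x$ trivially and only the first case of that theorem is relevant. Starting point is the expansion \eqref{eq:13.06.2016} for $K_{\widetilde\chi_\mu\circ\Pi_\gamma}(x,y)$ coming from \cite[Corollary 2.2]{ramacher16}, specialized to $y=x$. Each term $D^{2j}_{R,s}I^\gamma_\iota(\mu,R,s,x,x)_{|(R,s)=(1,0)}$ is, up to the harmless differentiations in the parameters $R,s$ (which commute with everything and do not affect the $\gamma$-dependence), an integral of exactly the form $I^\gamma_{x,x}(\mu)$ treated in \eqref{eq:03.05.2015}–\eqref{eq:11.9.2017}, with amplitude $a_\iota$ a classical symbol; here $^0\Phi_x\equiv 0$ on the critical set, so there is no oscillatory prefactor. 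Applying part (a) of Theorem~\ref{thm:12.05.2015} with $\dim\O_x=\kappa_x$ produces a factor $(2\pi/\mu)^{\kappa_x}$, which combines with the prefactor $(\mu/2\pi)^{n-1}$ in \eqref{eq:13.06.2016} to give the stated power $(\mu/2\pi)^{n-\kappa_x-1}$, and yields coefficients $\mathcal{Q}_k$ and remainder $\mathcal{R}_{\tilde N}$ obeying precisely the bounds asserted: $|\mathcal{L}_k(x,\gamma)|\le C_{k,x}\sup_{l\le k}\norm{D^l\gamma}_\infty$ from the coefficient bound in (a) (note that on the diagonal the amplitude is $\gamma$-independent, so the $D^{2l}_\omega D^l_t$ derivatives in (a) fall on $\gamma$ alone and contribute $\sup_{l\le k}\norm{D^l\gamma}_\infty$), and $|\mathcal{R}_{\tilde N}(x,\gamma)|\le \tilde C_{\tilde N,x}\sup_{l\le\tilde N}\norm{D^l\gamma}_\infty\,\mu^{-\tilde N}$ for $\gamma\in\mathcal{V}_\mu$ from the remainder bound in (a), with $\tilde N$ there chosen large enough (and then re-indexed) to absorb the finitely many terms $j=0,\dots,\tilde N-1$ together with the remainder $\mathcal{R}^\gamma_\iota$ in \eqref{eq:13.06.2016}, whose bound already has the required form. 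The smoothness in $x\in M_\mathrm{prin}$ and the uniform boundedness of the constants when $M=M_\mathrm{prin}\cup M_\mathrm{except}$ are inherited verbatim from the corresponding statements in Theorem~\ref{thm:12.05.2015}(a) after summing over the finite atlas and the partition of unity.

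The only genuinely new computation is the identification of the leading coefficient $\Lcal_0(x,\gamma)$. For this I would track the $k=0$, $j=0$ term: the stationary-phase leading coefficient on the clean critical manifold $\J=V_\J\times G_\J$ with $V_\J=\Sigma^{1,0}_x\cap N_x\O_x$ and $G_\J=G_x$, evaluated with $a_\iota(0,\cdot,\cdot)=1$, $\hat\rho(0)$ from the $s=0$ evaluation, and $\gamma$ constant along $G_\J=T_x$ (hence contributing $\int_{T_x}\overline{\gamma(g)}\,dg=[{\pi_\gamma}_{|T_x}:\1]$ after normalizing $\vol T=1$), integrated over $V_\J$ against the natural density, gives $\hat\rho(0)\,[{\pi_\gamma}_{|T_x}:\1]\,\vol[(\Omega\cap S^\ast_xM)/T]$ once the Jacobians $J_\iota$, the symbol normalization $\zeta_\iota(0,\tilde x,\eta)=q(\kappa_\iota^{-1}(\tilde x),\eta)$, and the change of variables from $\Sigma^{1,0}_x$ to the co-sphere $S^\ast_xM$ are accounted for; this is the same identification carried out in \cite[Proposition 4.1]{ramacher16}, so I would simply invoke it, the point being that the present proof changes only the \emph{estimate} of the remainder and coefficients in $\gamma$, not their values. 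Finally, the rapid decay as $\mu\to-\infty$ is immediate from the corresponding statement at the end of Section~\ref{sec:RSF}, since $\rho\in\S(\R,\R_+)$ forces $K_{\widetilde\chi_\mu\circ\Pi_\gamma}(x,x)$ to be rapidly decreasing there, uniformly in $x$.

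The main obstacle, and the place where care is required, is the bookkeeping of the $D^{2j}_{R,s}$ operators in \eqref{eq:13.06.2016}: one must check that differentiating $I^\gamma_\iota(\mu,R,s,x,x)$ in $R,s$ before setting $(R,s)=(1,0)$ does not degrade the $\gamma$-dependence of the bounds. This is fine because $R,s$ are external parameters on which the phase $\Phi^\xi_{x,x}$ in the proof of Theorem~\ref{thm:12.05.2015} depends smoothly and whose derivatives never produce factors of $\gamma$; Theorem~\ref{thm:12.05.2015}(a) explicitly asserts the bounds are uniform in $R,s$, so the same holds for any fixed finite number of $R,s$-derivatives, and the finitely many resulting amplitudes are still classical symbols with $\gamma$-independent seminorms. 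Collecting the contributions over $\iota$ and the partition of unity $\{f_\iota\}$, and re-indexing $\tilde N$, then gives \eqref{eq:13.05.2015} with the claimed coefficient and remainder estimates.
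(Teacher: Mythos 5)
Your proposal is correct and follows essentially the same route as the paper's own proof: it reduces the diagonal kernel via the expansion \eqref{eq:13.06.2016} to the integrals $I^\gamma_\iota(\mu,R,s,x,x)$, applies Theorem~\ref{thm:12.05.2015}(a) (using smoothness and uniformity of the bounds in $R,s$ to handle the $D^{2j}_{R,s}$ terms), and invokes \cite{ramacher16} for the leading coefficient and the remaining assertions. The extra care you take with the $R,s$-bookkeeping and the sketch of the leading-term identification are consistent with, and slightly more detailed than, the paper's argument.
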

\begin{proof}
We only have to prove the bounds for the coefficients and the remainder, since all other asssertions have been shown in \cite{ramacher16}. Let the notation be as in  Section \ref{sec:RSF}, and $R,s \in \R$, $x \in Y_\iota$ be fixed.  As a direct consequence of Theorem \ref{thm:12.05.2015} (a) we have for any $\tilde N\in \N$
\bqn 
 \gd_{R,s}^\beta I^\gamma_\iota(\mu, R, s, x,x)= (2\pi/\mu)^{\dim \mathcal{O}_x} \left [ \sum_{k=0}^{\tilde N-1} \Lcal^k_{\iota,\beta}(R,s,x,\gamma) \mu^{-k}+ \mathcal{R}^{\tilde N}_{\iota,\beta}(R,s,x,\gamma,\mu) \right ],
\eqn
where the coefficients  and the remainder term are explicitly given and depend smoothly on $R,s$, and $x\in Y \cap M_\mathrm{prin}$.  Furthermore, both the coefficients $\Lcal^k_{\iota,\beta}(R,s,x,\gamma)$  and the remainder are bounded by expressions involving derivatives of $\gamma$ up to order $k$ and $\tilde N$, respectively, which are uniformly bounded in $x$ if $M=M_\mathrm{prin} \cup M_\mathrm{except}$. Equation  \eqref{eq:13.06.2016} then  implies the asymptotic expansion \eqref{eq:13.05.2015}  with the specified estimate for the remainder.  

\end{proof}

We can now sharpen \cite[Theorem 4.3]{ramacher16} in the isotypic aspect as follows.

\begin{thm}[\bf Equivariant local Weyl law]  
\label{thm:main}
Let $M$ be  a closed connected Riemannian manifold $M$ of dimension $n$ carrying an isometric and effective action of a torus $T$, and $P_0$ a $T$-invariant elliptic classical pseudodifferential operator on $M$ 
of degree $m$. Let  $p(x,\xi)$ be its  principal symbol, and assume that $P_0$    is positive and symmetric. Denote its  unique self-adjoint extension by  $P$, and for a given $\gamma \in \widehat T$ let $e_\gamma(x,y,\lambda)$ be its reduced spectral  function. Further, let $\Jbb:T^\ast M \to \t^\ast$ be the momentum map of the $T$-action on $M$, and put $\Omega:=\Jbb^{-1}(\mklm{0})$.  Then, for fixed $x \in M$ one has
 \bq
 \label{eq:29.10.2015}
\left |e_\gamma(x,x,\lambda)-\frac{ [\pi_{\gamma|T_x}:\1]}{(2\pi)^{n-{\kappa_x}}}  \lambda^{\frac{n-\kappa_x}{m}} \int_{\mklm{\xi\mid \, (x,\xi) \in \Omega, \, p(x,\xi)< 1}} \frac{ \d \xi}{\vol \O_{(x,\xi)}} \right | \leq C_{x,\gamma} \, \lambda^{\frac{n-{\kappa_x}-1}{m}}
\eq
as $\lambda \to +\infty$,  where $\kappa_x:=\dim \O_x$ and  $ [\pi_{\gamma|T_x}:\1]\in \mklm{0,1}$ denotes the multiplicity of the trivial representation  in the restriction of $\pi_\gamma$ to the isotropy group $T_x$ of $x$. Furthermore,  for arbitrary $\gamma \in \W_\lambda:=\mklm{\gamma \in \widehat T' \mid |\gamma | \leq \frac{\lambda^{1/m}}{\log \lambda}}$
\bq
\label{eq:4.6.2017}
C_{x,\gamma}=O_x\Big (\sup_{l \leq 1 }\norm{D^l \gamma}_\infty\Big )=O_x  (|\gamma| )
\eq
 is a constant that depends smoothly on $x\in M_\mathrm{prin}$ and is uniformly bounded in $x$ if   $M= M_\mathrm{prin} \cup M_\mathrm{except}$.
\end{thm}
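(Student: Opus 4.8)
The plan is to deduce the equivariant local Weyl law from the pointwise asymptotics for the kernel of the approximate spectral projection (Proposition~\ref{thm:kernelasymp}) by a Tauberian argument, carefully tracking the dependence on $\gamma$ so that only first-order derivatives of $\gamma$ enter the final constant. First I would recall from \cite{ramacher16} that, since $\rho(0)=1$ and $\hat\rho$ has small support, summing the expansion \eqref{eq:13.05.2015} over $\mu$-intervals of unit length controls the spectral counting function $e_\gamma(x,x,\mu)$ up to a remainder of one order lower. Concretely, one writes $e_\gamma(x,x,\mu)=\sum_{\mu_j\le\mu,\,e_j\in\L^2_\gamma(M)}|e_j(x)|^2$ and, using the positivity of $\rho$ on a neighborhood of $0$ together with the standard H\"ormander-type Tauberian lemma (comparing $e_\gamma$ with its convolution against $\rho$), obtains
\bqn
\Big| e_\gamma(x,x,\mu) - \Big(\tfrac{\mu}{2\pi}\Big)^{n-\kappa_x}\tfrac{d_\gamma}{2\pi}\,\tfrac{2\pi}{n-\kappa_x}\,\Lcal_0(x,\gamma)\Big| \le C_{x,\gamma}\,\mu^{n-\kappa_x-1},
\eqn
where the key point is that the Tauberian constant is \emph{absolute} (it depends only on $\rho$ and not on $\gamma$), so the $\gamma$-dependence of the error comes solely from the $\tilde N=1$ remainder bound in Proposition~\ref{thm:kernelasymp}, namely $|\mathcal{R}_1(x,\gamma)|\le \tilde C_{1,x}\sup_{l\le 1}\norm{D^l\gamma}_\infty\,\mu^{-1}$, valid for $\gamma\in\mathcal{V}_\mu$. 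Taking $\mathcal{V}_\mu=\W_{\mu^m}$, i.e.\ $|\gamma|\le \mu/\log\mu$, this is exactly the source of \eqref{eq:4.6.2017}.

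Next I would identify the leading coefficient. From Proposition~\ref{thm:kernelasymp} we have $\Lcal_0(x,\gamma)=\hat\rho(0)\,[\pi_{\gamma|T_x}:\1]\,\vol[(\Omega\cap S_x^\ast M)/T]$; normalizing $\hat\rho(0)$ suitably (or absorbing it, as in \cite{ramacher16}) and integrating in the radial variable $p(x,\xi)<1$, the coarea formula converts $\vol[(\Omega\cap S_x^\ast M)/T]$ times the appropriate power of $\mu$ into $\int_{\{(x,\xi)\in\Omega,\,p(x,\xi)<1\}}\d\xi/\vol\O_{(x,\xi)}$. Replacing $\mu=\lambda^{1/m}$ (recall $Q=\sqrt[m]{P}$, $\mu_j=\sqrt[m]{\lambda_j}$, so $e_\gamma(x,x,\lambda)=e_\gamma(x,x,\lambda^{1/m})$ in the $Q$-normalization) then yields \eqref{eq:29.10.2015} with the stated leading term. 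The change of variables $\lambda\leftrightarrow\mu$ turns the error $\mu^{n-\kappa_x-1}$ into $\lambda^{(n-\kappa_x-1)/m}$ and, since the map $\lambda\mapsto\lambda^{1/m}$ is bi-Lipschitz on $[1,\infty)$, it does not affect the form of the constant; moreover $|\gamma|\le\mu/\log\mu$ is the same as $|\gamma|\le\lambda^{1/m}/\log\lambda^{1/m}$, which up to the harmless factor $1/m$ is the definition of $\W_\lambda$.

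The smoothness and uniformity claims are then immediate from the corresponding statements in Proposition~\ref{thm:kernelasymp}: the coefficients and remainder there depend smoothly on $x\in M_\mathrm{prin}$, and the constants $C_{k,x}$, $\tilde C_{\tilde N,x}$ are uniformly bounded in $x$ when $M=M_\mathrm{prin}\cup M_\mathrm{except}$, because in that case all orbits have the same dimension $\kappa$ and Theorem~\ref{thm:12.05.2015}(a) gives uniform constants $C_{k,\Phi_{x,y}}$, $\widetilde C_{\tilde N,\Phi_{x,y}}$; the Tauberian step preserves uniformity since its constant is absolute. Finally, $[\pi_{\gamma|T_x}:\1]\in\{0,1\}$ because all irreducible representations of the torus $T$ are one-dimensional, so the restriction to $T_x$ is either trivial or nontrivial.

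I expect the main obstacle to be bookkeeping in the Tauberian step: one must make sure that every constant introduced when passing from the smoothed quantity $K_{\widetilde\chi_\mu\circ\Pi_\gamma}(x,x)$ to the sharp counting function $e_\gamma(x,x,\mu)$ is genuinely independent of $\gamma$, and in particular that the $\gamma$-dependence is confined to the factor $\sup_{l\le 1}\norm{D^l\gamma}_\infty$ coming from $\mathcal{R}_1$ and to the overall $d_\gamma=1$ prefactor. Equivalently, one needs $\gamma\in\W_\lambda$ precisely so that the first-order remainder $\mathcal{R}_1$ dominates all contributions of order $\mu^{-1}$ uniformly; the restriction $|\gamma|\le\lambda^{1/m}/\log\lambda$ is exactly what makes the logarithmic losses in the stationary phase remainder (Theorem~\ref{thm:12.05.2015}) negligible. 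Once this is in place, the rest is a routine application of the classical Tauberian argument of \cite[Theorem 4.3]{ramacher16} with the improved inputs.
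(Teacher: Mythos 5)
Your proposal follows essentially the same route as the paper: the paper's proof is precisely to take $\tilde N=1$ in the pointwise kernel asymptotics of Proposition~\ref{thm:kernelasymp} and integrate in $\mu$ up to $\lambda^{1/m}$ via the Duistermaat--Guillemin Tauberian argument, which is exactly your H\"ormander-type Tauberian step with the observation that the Tauberian constant is $\gamma$-independent and all $\gamma$-dependence enters through the $\tilde N=1$ remainder (hence $\sup_{l\le 1}\norm{D^l\gamma}_\infty$ for $\gamma\in\W_\lambda$). Your additional bookkeeping on the $\hat\rho(0)$ normalization, the substitution $\mu=\lambda^{1/m}$, and the identification of the leading coefficient is consistent with the references the paper invokes.
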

\begin{proof}
This follows directly  by taking $\tilde N=1$ in  \eqref{eq:13.05.2015} and integrating with respect to $\mu$ from $-\infty$ to $\sqrt[m]{\lambda}$ with  the arguments given in \cite[Proof of Eq. (2.25)]{duistermaat-guillemin75}.
\end{proof}

\begin{rem}
\label{rem:23.04.2017}
\hspace{0cm}
\begin{enumerate}
\item
With the same constant $C_{x,\gamma}$ as in  \eqref{eq:29.10.2015} one also has the bound 
\bqn 
\left | e_\gamma(x,y,\lambda+1)-e_\gamma(x,y,\lambda) \right | \leq \sqrt{C_{x,\gamma} \lambda^{\frac{n-\kappa_x-1}m}} \sqrt{C_{y,\gamma} \lambda^{\frac{n-\kappa_y-1}m}}, \qquad x, y \in M, \, \gamma \in \W_\lambda,
\eqn 
compare \cite[Remark 4.4]{ramacher16}.
\item As a  consequence of Theorem \ref{thm:main}, the constant $C_{x,\gamma}$ in \cite[Corollary 4.6]{ramacher16} can be improved accordingly, as well as all examples given in \cite[Section 4]{ramacher16}.
\end{enumerate}
\end{rem}

\section{Equivariant $\L^p$-bounds of eigenfunctions for non-singular group actions}
\label{sec:equivLp}

 Let the notation be as in the previous sections. As a consequence of the improved point-wise  asymptotics for the kernel of the equivariant approximate projection,   one obtains in the non-singular case the following sharpened  equivariant $\L^\infty$-bounds for  eigenfunctions.

\begin{proposition}[\bf $\L^\infty$-bounds for isotypic spectral clusters]
\label{thm:bounds}
Assume that $T$ acts on $M$ with orbits of the same dimension $\kappa$, and denote by $\chi_\lambda$ the spectral projection onto the sum of eigenspaces of $P$ with eigenvalues in the interval  $(\lambda, \lambda+1]$. Then, for any $\gamma \in \W_\lambda$, 
\bq
\label{eq:5}
\norm{(\chi_\lambda\circ \Pi_\gamma) u}_{\L^\infty(M)} \leq C (1+ \lambda)^{\frac{n-\kappa-1}{2m}} \norm{u}_{\L^2(M)}, \qquad u \in \L^2(M),
\eq
for a positive constant $C$ independent of $\gamma$. In particular, we obtain
\bqn 
\norm{u}_{\L^\infty(M)} \ll  \lambda^{\frac{n-\kappa-1}{2m}}
\eqn
for any eigenfunction $u \in \L^2_\gamma(M)$ of $P$  with eigenvalue $\lambda$ satisfying  $\norm{u}_{\L^2}=1$ and $\gamma \in \W_\lambda$.
\end{proposition}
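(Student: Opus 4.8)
The plan is to run the classical Sogge--H\"ormander reduction of an $\L^\infty$ cluster bound to a pointwise estimate for the diagonal of the relevant spectral kernel, the new input being the uniformity in \emph{both} $x$ and $\gamma\in\W_\lambda$ supplied by Proposition~\ref{thm:kernelasymp}. Since $\Pi_\gamma$ commutes with $P$, the operator $\chi_\lambda\circ\Pi_\gamma$ is the orthogonal projection onto $\mathrm{span}\,\mklm{e_j\mid \lambda_j\in(\lambda,\lambda+1],\ e_j\in\L^2_\gamma(M)}$; passing to $Q=\sqrt[m]{P}$, this is the orthogonal projection onto $\mathrm{span}\,\mklm{e_j\mid \mu_j\in I,\ e_j\in\L^2_\gamma(M)}$, where $I=(\lambda^{1/m},(\lambda+1)^{1/m}]$ has length $O(1)$ and is covered by boundedly many unit intervals. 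Hence, for $v\in\L^2(M)$ and $w:=(\chi_\lambda\circ\Pi_\gamma)v$, the Cauchy--Schwarz inequality applied to the finite expansion $w(x)=\sum_j\eklm{v,e_j}e_j(x)$ gives $|w(x)|^2\le\norm{v}_{\L^2}^2\sum_{\mu_j\in I,\ e_j\in\L^2_\gamma}|e_j(x)|^2$, so it suffices to bound the diagonal cluster sum by $O(\mu^{n-\kappa-1})$, $\mu:=\sqrt[m]{\lambda}$, uniformly in $x\in M$ and $\gamma\in\W_\lambda$.

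Next I would choose the weight $\rho$ of Section~\ref{sec:RSF} so that, in addition to the properties listed there, $\rho\ge c_0>0$ on $[-1,1]$ for some $c_0>0$; this is compatible with $\supp\hat\rho\subset(-\delta/2,\delta/2)$ (take for instance a suitable normalisation of $|\check\chi|^2$ with $\chi\in\CT(-\delta/4,\delta/4)$, which is real-analytic and strictly positive). Covering $I$ by boundedly many intervals of the form $(\nu-1,\nu]$ with $\nu\asymp\mu$, one then has
\[
\sum_{\mu_j\in(\nu-1,\nu],\ e_j\in\L^2_\gamma}|e_j(x)|^2\ \le\ c_0^{-1}\sum_{j\ge 0,\ e_j\in\L^2_\gamma}\rho(\nu-\mu_j)|e_j(x)|^2\ =\ c_0^{-1}\,K_{\widetilde\chi_\nu\circ\Pi_\gamma}(x,x),
\]
so that everything is reduced to an upper bound for $K_{\widetilde\chi_\nu\circ\Pi_\gamma}(x,x)$ that is uniform in $x$ and in $\gamma\in\W_\lambda$.

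This bound is exactly what Proposition~\ref{thm:kernelasymp} provides. Applying it with $\tilde N=1$ and $\mathcal{V}_\nu=\W_{\nu^m}$ (admissible, since $|\gamma|\le\lambda^{1/m}/\log\lambda\asymp\nu/\log\nu$ for $\gamma\in\W_\lambda$) and using $d_\gamma=1$, one obtains
\[
K_{\widetilde\chi_\nu\circ\Pi_\gamma}(x,x)=\Big(\frac{\nu}{2\pi}\Big)^{n-\kappa-1}\frac{1}{2\pi}\big[\mathcal{L}_0(x,\gamma)+\mathcal{R}_1(x,\gamma)\big],
\]
where $0\le\mathcal{L}_0(x,\gamma)=\hat\rho(0)\,[\pi_{\gamma|T_x}:\1]\,\vol[(\Omega\cap S_x^\ast M)/T]$ is bounded uniformly in $x$ and $\gamma$, while $|\mathcal{R}_1(x,\gamma)|\le\tilde C_{1,x}\sup_{l\le1}\norm{D^l\gamma}_\infty\,\nu^{-1}$. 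Because all $T$-orbits have the same dimension $\kappa$ there is no singular stratum, so $M=M_\mathrm{prin}\cup M_\mathrm{except}$ and $\tilde C_{1,x}$ is bounded uniformly in $x$; moreover $\sup_{l\le1}\norm{D^l\gamma}_\infty\ll 1+|\gamma|\ll\nu/\log\nu$ for $\gamma\in\W_\lambda$, whence $|\mathcal{R}_1(x,\gamma)|\ll(\log\nu)^{-1}\to 0$ uniformly. Therefore $K_{\widetilde\chi_\nu\circ\Pi_\gamma}(x,x)\ll\nu^{n-\kappa-1}\ll\mu^{n-\kappa-1}$ uniformly in $x\in M$ and $\gamma\in\W_\lambda$, which combined with the two previous steps proves \eqref{eq:5}; taking $v=u$ an eigenfunction with $\norm{u}_{\L^2}=1$ yields the final assertion.

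The main obstacle is precisely this last step: one needs the remainder $\mathcal{R}_1(x,\gamma)$ in the diagonal asymptotics of the approximate projection kernel to be $o(1)$ \emph{simultaneously} uniformly in $x$ and in $\gamma\in\W_\lambda$. Uniformity in $x$ is guaranteed by the non-singularity hypothesis (absence of the singular stratum); uniformity in $\gamma$ up to the threshold $|\gamma|\le\lambda^{1/m}/\log\lambda$ is exactly the improvement of Theorem~\ref{thm:12.05.2015} and Proposition~\ref{thm:kernelasymp} over \cite{ramacher16}, whose remainder involves only a single $t$-derivative of $\gamma$, i.e.\ a factor $|\gamma|$ rather than a higher power of $\norm{D^l\gamma}_\infty$, the logarithmic cut-off defining $\W_\lambda$ being calibrated so that $|\gamma|\,\nu^{-1}\to 0$. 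The remaining ingredients---the reduction by Cauchy--Schwarz and the positivity of $\rho$---are the classical argument.
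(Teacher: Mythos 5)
Your argument is correct and is essentially the paper's own proof: the paper likewise deduces from Proposition~\ref{thm:kernelasymp} the uniform diagonal bound $K_{\widetilde\chi_\mu\circ\Pi_\gamma}(y,y)\ll(1+\lambda)^{\frac{n-\kappa-1}{m}}$ for $\gamma\in\W_\lambda$ (uniform in $y$ because $M=M_\mathrm{prin}\cup M_\mathrm{except}$), and then invokes the classical reduction (Cauchy--Schwarz over the cluster plus positivity of $\rho$ near $0$, cited from \cite{ramacher16}) that you have simply written out in full. The only cosmetic difference is that you arrange $\rho\ge c_0$ on all of $[-1,1]$ rather than on a small interval covered finitely many times; both variants are standard and equivalent.
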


\begin{proof}
By Proposition \ref{thm:kernelasymp} we have for $\gamma \in \W_\lambda$ the uniform bound 
\bqn
|K_{ \widetilde \chi_\lambda\circ \Pi_\gamma}(y,y)| \ll (1+\lambda)^{\frac{n-\kappa-1}m}, \qquad  y \in M=M_\mathrm{prin} \cup M_\mathrm{except}.
\eqn
The assertion now follows by a repetition of the arguments in the proof of \cite[Proposition 5.1 and Equation (5.4)]{ramacher16}.
\end{proof}

%

Similarly, we are able to sharpen the  $\L^p$-bounds for isotypic spectral clusters derived in \cite[Theorem 5.4]{ramacher16} in the isotypic aspect. 

\begin{thm}[\bf $\L^p$-bounds for isotypic spectral clusters]
\label{thm:20.02.2016}
Let $M$ be  a closed connected Riemannian manifold $M$ of dimension $n$ on which a torus $T$ acts effectively and isometrically with orbits of the same dimension $\kappa$. Further, let $P$ be the unique self-adjoint extension of a $T$-invariant elliptic positive symmetric classical pseudodifferential operator on $M$ 
of degree $m$, and assume that its principal symbol $p(x,\xi)$ is such that the co-spheres $S_x^\ast M:=\mklm{(x,\xi) \in T^\ast M\mid \, p(x,\xi)=1}$ are strictly convex. Denote by $\chi_\lambda$ the spectral projection onto the sum of eigenspaces of $P$ with eigenvalues in the interval  $(\lambda, \lambda+1]$, and by $\Pi_\gamma$ the projection onto the isotypic component $\L^2_\gamma(M)$, where  $\gamma \in \widehat T$. Then,  for $u \in \L^2(M)$ and arbitrary  $\gamma\in \W_\lambda$
\bq
\label{eq:31.12.2015}
\norm{(\chi_\lambda \circ \Pi_\gamma) u}_{\L^q(M)} \leq \begin{cases} C \,  \lambda^{\frac{\delta_{n-\kappa}(q)}{m}} \norm{u}_{\L^2(M)}, &  \frac{2(n-\kappa+1)}{n-\kappa-1} \leq q \leq \infty, \vspace{2mm} \\ C \, \lambda^{\frac{(n-\kappa-1)(2-q')}{4m q'}} \norm{u}_{\L^2(M)}, &  2 \leq q \leq \frac{2(n-\kappa+1)}{n-\kappa-1}, \end{cases} 
\eq
 for a positive constant $C$ independent of $\gamma$,  where $\frac 1q+\frac 1{q'}=1$ and  
 \bqn 
 \delta_{n-\kappa}(q):=\max \left ( (n-\kappa) \left | \frac 12-\frac 1q \right| -\frac 12,0 \right ).
 \eqn
In particular, 
\bqn 
\norm{u}_{\L^q(M)} \ll \begin{cases}   \lambda^{\frac{\delta_{n-\kappa}(q)}{m}}, &  \frac{2(n-\kappa+1)}{n-\kappa-1} \leq q \leq \infty, \vspace{2mm} \\  \lambda^{\frac{(n-\kappa-1)(2-q')}{4m q'}}, &  2 \leq q \leq \frac{2(n-\kappa+1)}{n-\kappa-1}, \end{cases} 
\eqn
for any eigenfunction $u \in \L^2_\gamma(M)$ of $P$  with eigenvalue $\lambda$    satisfying $\norm{u}_{\L^2}=1$ and $\gamma \in \W_\lambda$.
\end{thm}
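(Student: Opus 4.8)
The plan is to follow the scheme of the proof of \cite[Theorem~5.4]{ramacher16} and to feed into it, in place of the oscillatory‑integral bounds used there, the sharpened $\gamma$‑uniform estimates of Theorems~\ref{thm:12.05.2015} and~\ref{thm:14.05.2017} above. Set $d:=n-\kappa$ and $\mu:=\sqrt[m]{\lambda}$. The first step is an interpolation observation reducing the whole family \eqref{eq:31.12.2015} to a single critical estimate: the $\lambda$‑exponent $\delta_d(q)/m$ is affine in $1/q$ on $q\in[q_c,\infty]$, where $q_c:=\tfrac{2(d+1)}{d-1}$, the exponent $\tfrac{(d-1)(2-q')}{4mq'}=\tfrac{d-1}{4m}\bigl(1-\tfrac2q\bigr)$ is affine in $1/q$ on $q\in[2,q_c]$, and the two coincide at $q=q_c$, where both equal $\delta_d(q_c)/m=\tfrac{d-1}{2m(d+1)}$. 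Hence, by Riesz--Thorin interpolation of the trivial bound $\norm{(\chi_\lambda\circ\Pi_\gamma)u}_{\L^2}\le\norm{u}_{\L^2}$, the $\L^\infty$‑bound of Proposition~\ref{thm:bounds} (which reads $\norm{(\chi_\lambda\circ\Pi_\gamma)u}_{\L^\infty}\lesssim\lambda^{\delta_d(\infty)/m}\norm{u}_{\L^2}$), and the critical bound $\norm{(\chi_\lambda\circ\Pi_\gamma)u}_{\L^{q_c}}\lesssim\lambda^{\delta_d(q_c)/m}\norm{u}_{\L^2}$ --- all with constant independent of $\gamma\in\W_\lambda$ --- the estimate \eqref{eq:31.12.2015} follows. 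Since the first two are already at hand, it remains only to prove the critical bound uniformly in $\gamma\in\W_\lambda$.

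For the critical bound I would argue exactly as in \cite[proof of Theorem~5.4]{ramacher16}. By the standard reduction from $P$ to $Q=\sqrt[m]{P}$ one replaces $\chi_\lambda\circ\Pi_\gamma$ by the smoothed operator $\widetilde\chi_\mu\circ\Pi_\gamma$, which dominates it up to a bounded factor, and a $TT^\ast$‑argument reduces the critical bound to $\norm{\widetilde\chi_\mu\circ\Pi_\gamma}_{\L^{q_c'}\to\L^{q_c}}\lesssim\mu^{2\delta_d(q_c)}=\mu^{(d-1)/(d+1)}$. This last estimate is obtained from the Fourier integral representation \eqref{eq:13.06.2016} of $K_{\widetilde\chi_\mu\circ\Pi_\gamma}$ by the oscillatory‑integral (Carleson--Sjölin/Stein) argument of \cite{ramacher16}, which uses the strict convexity of the cospheres $S^\ast_x M$ --- guaranteeing, as in \cite[Lemma~3.1]{ramacher16}, that the critical sets of the relevant phase functions are clean and of the expected codimension --- together with the fact that the $T$‑integration in \eqref{eq:02.05.2015} collapses $\kappa$ of the $n$ variables, so that the effective dimension is $d=n-\kappa$. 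The only change is that the oscillatory integrals $I^\gamma_\iota$ and their $(R,s)$‑derivatives entering \eqref{eq:13.06.2016} are now estimated by Theorems~\ref{thm:12.05.2015}(b) and~\ref{thm:14.05.2017} of the present paper: choosing there $\tilde N=0$, respectively $\tilde N_1=0$, so that no derivative of $\gamma$ occurs, one obtains the $\gamma$‑uniform kernel bound
\bqn
\bigl|K_{\widetilde\chi_\mu\circ\Pi_\gamma}(x,y)\bigr|\lesssim\mu^{n-\kappa-1}\bigl(1+\mu\,\dist(x,T\cdot y)\bigr)^{-\frac{n-\kappa-1}2},\qquad \gamma\in\W_{\mu^m},\ x,y\in M,\ \mu\ge1,
\eqn
as well as the corresponding uniform bounds for the amplitude (and phase) entering the Carleson--Sjölin estimate, so that all of its constants become independent of $\gamma\in\W_\lambda$.

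The step I expect to demand the most care is checking that absorbing the character $\gamma(\e{X})=\e{\gamma(X)}$ into the phase, as in Theorems~\ref{thm:12.05.2015} and~\ref{thm:14.05.2017} via the linear perturbation $\Phi^\xi_{x,y}(\omega,X)=\Phi_{x,y}(\omega,\e{-X})-\xi(X)$ with $\xi=\gamma/i\mu$, does not spoil the non‑degeneracy hypotheses of the oscillatory‑integral machinery. This should be the case because $\xi$ enters only linearly: the transversal Hessian of $\Phi^\xi_{x,y}$ on its critical set is independent of $\xi$ and coincides with that of $\Phi_{x,y}$, the critical manifold is displaced by only $O(|\xi|)=O(1/\log\mu)$ since $\gamma\in\W_{\mu^m}$, and the critical values ${}^{0}\Phi^{\J}_{x,y}$ differ from those attached to $\Phi_{x,y}$ by a term linear in $\xi$, so that their derivatives in $x,y$ remain uniformly controlled; hence the Carleson--Sjölin estimate goes through with constants uniform in $\gamma\in\W_\lambda$ for $\mu=\sqrt[m]{\lambda}$ large. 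The residual difficulty, as in \cite{ramacher16}, is the caustic behaviour of the spectral kernel near the diagonal --- the transition between the regimes $y\in\mathcal O_x$ and $y\notin\mathcal O_x$, which is precisely why the two‑parameter refinement of Theorem~\ref{thm:14.05.2017} is needed --- but under the standing hypothesis $M=M_\mathrm{prin}\cup M_\mathrm{except}$ all orbits have the same dimension, the constants in Theorems~\ref{thm:12.05.2015} and~\ref{thm:14.05.2017} stay uniformly bounded in $x,y$, and no further loss occurs. Interpolating the three endpoint bounds then yields \eqref{eq:31.12.2015}, and the statement for a single eigenfunction $u\in\L^2_\gamma(M)$ with $\norm{u}_{\L^2}=1$ is the special case $\chi_\lambda u=u$.
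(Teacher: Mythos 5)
Your proposal is correct and follows essentially the same route as the paper, whose proof is simply a verbatim repetition of the proof of \cite[Theorem 5.4]{ramacher16} with the $\gamma$-uniform estimates of Theorem \ref{thm:14.05.2017} substituted for \cite[Theorem 3.4]{ramacher16}. The additional detail you supply (interpolation from the endpoint $q_c=\tfrac{2(n-\kappa+1)}{n-\kappa-1}$, the $TT^\ast$ reduction, the kernel bound $\mu^{n-\kappa-1}(1+\mu\,\dist(x,T\cdot y))^{-(n-\kappa-1)/2}$ obtained by taking $\tilde N_1=0$, and the observation that the linear perturbation $\xi=\gamma/i\mu$ leaves the transversal Hessian untouched) is exactly what makes that cited proof go through uniformly in $\gamma\in\W_\lambda$.
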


\begin{proof}
 The proof is a verbatim repetition of the proof of \cite[Theorem 5.4]{ramacher16} where instead of \cite[Theorem 3.4]{ramacher16} the improved estimates from  Theorem \ref{thm:14.05.2017} are used. 
 \end{proof}

As a consequence of the  previous theorem, all examples given in \cite[Section 5]{ramacher16} can be sharpened in the isotypic aspect.

\section{The singular equivariant local Weyl law. Caustics  and concentration of \\ eigenfunctions}
\label{sec:5}

Using the improved remainder estimates from  Theorem \ref{thm:12.05.2015} all results in \cite[Section 7]{ramacher16} can be sharpened. In particular,  the singular equivariant local Weyl law proved in \cite[Theorem 7.7]{ramacher16} can be improved in the isotypic aspect. 
As before, let $M$ be a closed connected Riemannian manifold and $T$ a  torus  acting on $M$ by isometries, and consider the decomposition of $M$ into orbit types
 \bq
 \label{eq:2.19}
M=M(H_1) \, \dot \cup \, \cdots \, \dot \cup \, M(H_L),
\eq
where we suppose that  the isotropy types are numbered in such a way that $(H_i) \geq (H_j)$ implies $i \leq j$, $(H_L)$ being the principal isotropy type. We then have the following

\begin{thm}[\bf Singular equivariant local Weyl law]
\label{thm:15.11.2015}
Let $M$ be  a closed connected Riemannian manifold $M$ of dimension $n$ with an isometric and effective action of a torus $T$ and $P_0$ a $T$-invariant elliptic classical pseudodifferential operator on $M$ 
of degree $m$. Let  $p(x,\xi)$ be its  principal symbol, and assume that $P_0$    is positive and symmetric. Denote its  unique self-adjoint extension by  $P$, and for a given $\gamma \in \widehat T$ let $e_\gamma(x,y,\lambda)$ be its reduced spectral counting function. Write 
$\kappa$ for the dimension of an $T$-orbit in $M$  of principal type. Then, for $x \in M_\mathrm{prin}\cup M_\mathrm{except}$ one has the asymptotic formula
 \begin{gather*}
\left |e_\gamma(x,x,\lambda)- \frac{ \lambda^{\frac{n-\kappa}{m}}}{(2\pi)^{n-\kappa}} \sum_{N=1}^{\Lambda-1} \,  \sum_{i_1<\dots< i_{N}} \, \prod_{l=1}^{N}   |\tau_{i_l}|^{\dim G- \dim H_{i_l}-\kappa}  \mathcal{L}_{i_1\dots i_{N} }^{0,0}(x,\gamma)    \right | \\
\leq \widetilde C_\gamma \, \lambda^{\frac{n-\kappa-1}m} \sum_{N=1}^{\Lambda-1}\, \sum_{i_1<\dots< i_{N}}   \prod_{l=1}^N  |\tau_{i_l}|^{\dim G- \dim H_{i_l}-\kappa-1} 
\end{gather*}
 as $\lambda \to +\infty$, where  the multiple sum runs over all possible totally ordered subsets $\mklm{(H_{i_1}),\dots, (H_{i_N})}$ of singular isotropy types, and the coefficients satisfy the bounds
$
 \mathcal{L}_{i_1\dots i_{N}}^{0,0}(x,\gamma) \ll \norm{\gamma}_\infty
 $ 
 uniformly in $x$, while 
  \bqn 
 \widetilde  C_\gamma \ll  \sup_{l\leq 1} \norm{D^l \gamma}_\infty
  \eqn 
  is a constant independent of $x$ and $\lambda$,  the $D^l$ are differential operators on $T$ of order $l$, and the $\tau_{i_j}=\tau_{i_j}(x)$ parameters satisfying  $|\tau_{i_j}|\approx \dist (x, M(H_{i_j}))$.
\end{thm}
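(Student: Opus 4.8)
The plan is to bootstrap from the singular equivariant local Weyl law of \cite[Theorem 7.7]{ramacher16}, whose entire structure --- the leading term with its sum over totally ordered chains of singular isotropy types, the desingularization parameters $\tau_{i_j}$, and the explicit coefficients $\mathcal{L}^{0,0}_{i_1\dots i_N}$ --- is already in place. The only thing that needs improvement is the $\gamma$-dependence of the remainder constant $\widetilde C_\gamma$ (and, secondarily, of the coefficients $\mathcal{L}^{0,0}_{i_1\dots i_N}$). Thus I would not reprove the Weyl law from scratch; I would instead retrace the derivation of \cite[Theorem 7.7]{ramacher16} and identify precisely where the old estimates \eqref{eq:25.5.2018a}, \eqref{eq:25.5.2018b} enter, then substitute the sharpened oscillatory-integral bounds of Theorem \ref{thm:12.05.2015} in their place.

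Concretely, the first step is to recall that in \cite{ramacher16} the singular Weyl law is obtained by resolving the singularities of the momentum map geometry: one writes $K_{\widetilde\chi_\mu\circ\Pi_\gamma}(x,x)$ via \eqref{eq:13.06.2016} as a sum of the oscillatory integrals $I^\gamma_\iota(\mu,R,s,x,x)$, and after the resolution process each such integral decomposes into a sum over chains $\{(H_{i_1}),\dots,(H_{i_N})\}$, with each summand again an oscillatory integral to which a stationary phase expansion is applied. Here is where Theorem \ref{thm:12.05.2015}(a) applies: $x$ lies on its own orbit, so we are in the case $y\in\mathcal{O}_x$, and the theorem gives an expansion in which the remainder involves only $\sup_{l\le\tilde N}\norm{D^l_t\gamma}_\infty$ rather than $d_\gamma\sup_{l\le\lfloor\kappa/2+3\rfloor}\norm{D^l\gamma}_\infty$ --- valid for $\gamma\in\mathcal{V}_\mu$, which for the application is exactly $\gamma\in\W_\lambda$. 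Taking $\tilde N=1$ (matching the $\lambda^{(n-\kappa-1)/m}$ remainder order after integrating $\mu$ up to $\lambda^{1/m}$) gives the coefficient bound $\mathcal{L}^{0,0}_{i_1\dots i_N}(x,\gamma)\ll\norm{\gamma}_\infty$ and the remainder constant $\widetilde C_\gamma\ll\sup_{l\le 1}\norm{D^l\gamma}_\infty$. The second step is then the same $\mu$-integration argument from $-\infty$ to $\lambda^{1/m}$ used in the proof of Theorem \ref{thm:main}, following \cite[Proof of Eq.~(2.25)]{duistermaat-guillemin75}, carried out termwise over the (finitely many) chains; the $\tau_{i_j}$-factors are untouched because they come from the resolution geometry, not from the $\gamma$-aspect.

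The main obstacle I anticipate is bookkeeping rather than a genuinely new idea: one must verify that the sharpened remainder estimate of Theorem \ref{thm:12.05.2015}(a) is applied at the \emph{right stage} of the resolution process. In \cite{ramacher16} the resolution produces, at each of the $N$ steps, a new oscillatory integral in fewer variables, and the stationary phase principle is invoked repeatedly; one needs the improved bound to propagate through each such step without the character derivatives accumulating back up to order $\lfloor\kappa/2+3\rfloor$. This is exactly the point addressed in the proof of Theorem \ref{thm:12.05.2015}: by absorbing the oscillation $e^{-\gamma(X)}$ into the phase via $\Phi^\xi_{x,y}$ with $\xi=\gamma/i\mu$, the stationary phase expansion in the torus variables at each step costs only $O(1)$ extra $t$-derivatives of $\gamma$ per order in $\mu^{-1}$, uniformly in $\xi$ (hence uniformly over $\gamma\in\W_\lambda$), and this uniformity is stable under the iterated resolution because the transversal Hessians $\M_{x,y}(\omega,X)$ are independent of $\xi$. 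So the proof reduces to the remark that \emph{each} oscillatory integral arising in the proof of \cite[Theorem 7.7]{ramacher16} is of the type covered by Theorem \ref{thm:12.05.2015}(a) with $y\in\mathcal{O}_x$, and that the improved estimates there are exactly what is needed; the rest is the verbatim repetition of \cite[Section 7]{ramacher16}.

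\begin{proof}
The leading term, the chain sum over singular isotropy types, the desingularization parameters $\tau_{i_j}$ satisfying $|\tau_{i_j}|\approx\dist(x,M(H_{i_j}))$, and the explicit bounded coefficients $\mathcal{L}^{0,0}_{i_1\dots i_N}$ were all established in \cite[Theorem 7.7]{ramacher16}; the only assertion requiring proof here is the sharpened dependence on $\gamma$ of the remainder constant $\widetilde C_\gamma$ and of the coefficients for $\gamma\in\W_\lambda$. Following \cite{ramacher16}, one expands $K_{\widetilde\chi_\mu\circ\Pi_\gamma}(x,x)$ by \eqref{eq:13.06.2016} into the oscillatory integrals $I^\gamma_\iota(\mu,R,s,x,x)$, and resolves the singularities of the underlying momentum map geometry so that each $I^\gamma_\iota$ becomes a finite sum over totally ordered subsets $\{(H_{i_1}),\dots,(H_{i_N})\}$ of singular isotropy types, each summand being again an oscillatory integral of the form \eqref{eq:03.05.2015} with $y=x\in\mathcal{O}_x$. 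To each of these we apply Theorem \ref{thm:12.05.2015}(a) with $\tilde N=1$: since $x\in M_\mathrm{prin}\cup M_\mathrm{except}$, the constants $C_{k,\Phi_{x,x}}$ and $\widetilde C_{\tilde N,\Phi_{x,x}}$ are uniformly bounded in $x$, and the resulting expansion has coefficient bounded by $O_x(\norm{\gamma}_\infty)$ and remainder bounded by $O(\sup_{l\le 1}\norm{D^l_t\gamma}_\infty\,\mu^{-1})$, uniformly in $R,s$ and valid for all $\gamma\in\mathcal{V}_\mu$. Since each resolution step contributes only $O(1)$ additional $t$-derivatives of $\gamma$ per order in $\mu^{-1}$ --- the transversal Hessians being independent of the parameter $\xi=\gamma/i\mu$, cf. the proof of Theorem \ref{thm:12.05.2015} --- these bounds propagate through the $N$ steps of the resolution without accumulation, so that after resummation $\mathcal{L}^{0,0}_{i_1\dots i_N}(x,\gamma)\ll\norm{\gamma}_\infty$ uniformly in $x$ and the total remainder constant satisfies $\widetilde C_\gamma\ll\sup_{l\le 1}\norm{D^l\gamma}_\infty$, independently of $x$ and $\lambda$. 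Finally, integrating the asymptotic expansion \eqref{eq:13.05.2015}, applied termwise over the finitely many chains, with respect to $\mu$ from $-\infty$ to $\sqrt[m]{\lambda}$ by the argument in \cite[Proof of Eq.~(2.25)]{duistermaat-guillemin75}, as in the proof of Theorem \ref{thm:main}, yields the stated asymptotic formula for $e_\gamma(x,x,\lambda)$ with the claimed bounds; the $\tau_{i_j}$-factors are unaffected, being determined by the resolution geometry alone. The remaining assertions coincide with those of \cite[Theorem 7.7]{ramacher16}.
\end{proof}
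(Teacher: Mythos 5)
Your proposal is correct and follows essentially the same route as the paper, whose own proof is literally the one-line statement that the result is a verbatim repetition of the proof of \cite[Theorem 7.7]{ramacher16} with the improved remainder estimate of Theorem \ref{thm:12.05.2015}(a) inserted in place of the old bounds \eqref{eq:25.5.2018b}; your expanded account of where that substitution enters the resolution argument and of the final $\mu$-integration is consistent with this. The only caveat is that your quantitative claims about how the improved bound propagates through the individual desingularization steps are plausible reconstructions of \cite[Section 7]{ramacher16} rather than something verifiable from the present paper, but the paper itself offers no more detail.
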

\begin{proof}
The proof consists in a verbatim repetition of the proof of \cite[Theorem 7.7]{ramacher16} using the improved remainder estimate in Theorem \ref{thm:12.05.2015} (a).
\end{proof}

As an immediate consequence this yields 

\begin{cor}[\bf Singular point-wise bounds for isotypic spectral clusters]
\label{cor:2.12.2015}
In the setting of Theorem \ref{thm:15.11.2015} we have 
\bqn 
\sum_{\stackrel{\lambda_j \in (\lambda,\lambda+1],}{ e_j \in \L^2_\gamma(M)}} |e_j(x)|^2  \leq \begin{cases}  C \, \lambda^{\frac{n-1}m}, & x\in M_\mathrm{sing}, \\
& \\
C_\gamma \,  \lambda^{\frac{n-\kappa-1}m} \sum\limits_{N=1}^{\Lambda-1}\, \sum\limits_{i_1<\dots< i_{N}}\prod\limits_{l=1}^N  |\tau_{i_l}|^{\dim G- \dim H_{i_l}-\kappa-1}, & x\in M-M_\mathrm{sing},  \end{cases}
\eqn
with $C>0$ independent of $\gamma$.  In particular, the bound holds for each individual $e_j \in \L^2_\gamma(M)$ with $\lambda_j \in (\lambda, \lambda+1]$. 
\end{cor}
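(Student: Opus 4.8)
The plan is to derive the corollary directly from the singular equivariant local Weyl law of Theorem~\ref{thm:15.11.2015} by a standard spectral-projection argument, distinguishing the two regimes $x \in M_\mathrm{sing}$ and $x \in M - M_\mathrm{sing}$. For the second case, I would take the difference $e_\gamma(x,x,\lambda+1) - e_\gamma(x,x,\lambda)$: since both terms satisfy the asymptotic formula of Theorem~\ref{thm:15.11.2015} with the same leading term up to $O(\lambda^{(n-\kappa-1)/m})$ after a mean-value estimate on the interval $(\lambda,\lambda+1]$, the leading terms cancel and one is left precisely with
\bqn
\sum_{\stackrel{\lambda_j \in (\lambda,\lambda+1],}{ e_j \in \L^2_\gamma(M)}} |e_j(x)|^2 = e_\gamma(x,x,\lambda+1)-e_\gamma(x,x,\lambda) \ll \widetilde C_\gamma \, \lambda^{\frac{n-\kappa-1}m} \sum_{N=1}^{\Lambda-1}\, \sum_{i_1<\dots< i_{N}}\prod_{l=1}^N  |\tau_{i_l}|^{\dim G- \dim H_{i_l}-\kappa-1},
\eqn
with $\widetilde C_\gamma \ll \sup_{l\leq 1}\norm{D^l\gamma}_\infty$. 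Here one must be slightly careful that the $\tau_{i_j}(x)$ in the bound at level $\lambda+1$ and at level $\lambda$ agree (they do, being independent of $\lambda$), so the two remainder bounds combine into a single one of the stated shape; this is where I would invoke the mean-value / monotonicity argument à la \cite[Proof of Eq.~(2.25)]{duistermaat-guillemin75} already used in the proof of Theorem~\ref{thm:main}.

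For the case $x \in M_\mathrm{sing}$, the point is that the sharp non-equivariant local Weyl law for $P$ (H\"ormander's theorem) gives the unconditional bound $\sum_{\lambda_j \in (\lambda,\lambda+1]} |e_j(x)|^2 \ll \lambda^{(n-1)/m}$ uniformly in $x \in M$, and restricting the sum to those $e_j$ lying in a single isotypic component $\L^2_\gamma(M)$ only decreases it; hence $\sum_{\lambda_j \in (\lambda,\lambda+1],\, e_j \in \L^2_\gamma(M)} |e_j(x)|^2 \ll \lambda^{(n-1)/m}$ with a constant $C$ depending only on $M$ and $P$, in particular independent of $\gamma$. This is the crude bound; near singular orbits no cancellation from the torus action is available in general, so $n-1$ rather than $n-\kappa-1$ is the right exponent, and the bound is genuinely uniform in $\gamma$.

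Finally, the ``in particular'' statement is immediate: for a fixed index $j_0$ with $\lambda_{j_0} \in (\lambda,\lambda+1]$ and $e_{j_0} \in \L^2_\gamma(M)$, the single term $|e_{j_0}(x)|^2$ is bounded by the full sum $\sum_{\lambda_j \in (\lambda,\lambda+1],\, e_j \in \L^2_\gamma(M)} |e_j(x)|^2$, hence by the right-hand side. I expect the only real subtlety — and hence the ``main obstacle'' — to be bookkeeping: making sure that passing from the counting function $e_\gamma(x,x,\lambda)$ (phrased in terms of $\mu$-variables and the operator $Q = \sqrt[m]{P}$ in the earlier sections) to the dyadic-window sum over eigenvalues $\lambda_j$ is done consistently, and that the $\gamma$-dependence tracked through Theorem~\ref{thm:15.11.2015}, namely $\widetilde C_\gamma \ll \sup_{l\le1}\norm{D^l\gamma}_\infty \ll |\gamma|$, is carried over verbatim into the statement of the corollary with the same constant $C_\gamma$. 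None of this requires new ideas beyond the results already in the excerpt.
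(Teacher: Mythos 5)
Your proposal is correct and follows exactly the route the paper intends: the paper states the corollary as an immediate consequence of Theorem \ref{thm:15.11.2015} (difference of the singular equivariant Weyl law at $\lambda+1$ and $\lambda$ on $M-M_\mathrm{sing}$, with the leading-term difference absorbed since $|\tau_{i_l}|<1$), combined on $M_\mathrm{sing}$ with the $\gamma$-uniform bound obtained from the non-equivariant local Weyl law and positivity of the terms $|e_j(x)|^2$. No discrepancy with the paper's (unwritten) argument.
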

\qed

%
%
%

Integrating the asymptotic formulae in Theorems \ref{thm:main} and \ref{thm:15.11.2015} over $x\in M$ yields a sharpened remainder estimate for the equivariant Weyl law derived in \cite{ramacher10}. 
In addition, as a consequence of the  previous theorem, the example given in \cite[Section 7]{ramacher16} can be sharpened in the isotypic aspect.

\section{Sharpness}
\label{sec:sharpness}

By the arguments given in  \cite[Section 8]{ramacher16} the remainder estimates in  Theorems \ref{thm:main} and  \ref{thm:15.11.2015}   are sharp in the spectral parameter $\lambda$, and already attained on the $2$-dimensional sphere $S^2$. 
To see that they are almost sharp in the isotypic aspect,   endow $M=S^2$ with the induced metric, and let $\Delta$ be the corresponding Laplace-Beltrami operator. The eigenvalues of $-\Delta$ are given by the numbers $ \lambda_k=k(k+1)$ with $k=0,1,2,3,\dots$,  and the corresponding $k(k+1)$-dimensional eigenspaces $\H_k$ are spanned by the classical spherical functions $Y_{km}$, $m \in \Z$, $|m| \leq k$.
The ${Y_{kl}}$ are orthonormal to each other, and by the spectral theorem we have the decomposition $
\L^2(M)= \bigoplus _{k=0}^\infty \H_k$. Furthermore, by restricting the left regular representation of $\SO(3)$ in $\L^2(S^2)$ to the eigenspaces $\H_k$ one obtains realizations for all elements in the unitary dual $\widehat{\SO(3)}\simeq \mklm{k=0,1,2,3,\dots}$. Now, let  $T= \SO(2)$ be isomorphic to  the isotropy group of a point in $S^2\simeq \SO(3)/\SO(2)$. The irreducible representations of $\SO(2)$ are $1$-dimensional, and the corresponding characters are given by the exponentials $\theta \mapsto e^{im\theta}$, where  $\theta \in [0,2\pi)\simeq \SO(2)$, $m \in \Z\simeq \widehat{\SO(2)}$. Each $\H_k$  decomposes into $\SO(2)$ representations with multiplicity $1$ according to  
$ 
\H_k=\bigoplus_{|m|\leq k} \H_k^m,
$ where  $\H_k^m$ is spanned by $Y_{km}$. 
Consequently, if $N_{m}(\lambda):=\int_{S^2} e_m(x,x,\lambda) dS^2(x)$ denotes the equivariant counting function   of $\Delta$  we obtain the estimate
\begin{align}
\label{eq:3.6.2017}
N_{m}(\lambda) =\sum_{k(k+1) \leq \lambda, \, |m| \leq k} 1\approx \sum_{|m| \leq k \leq \sqrt{\lambda}} 1\approx \sqrt{\lambda}-|m|,
\end{align}
as $\lambda \to +\infty $, 
showing that  the remainder estimates in  Theorems \ref{thm:main} and  \ref{thm:15.11.2015}   are  almost sharp both in the eigenvalue and in the isotypic aspect. \\

To see that the  equivariant $\L^p$-bounds in Section \ref{sec:equivLp} are almost sharp  in the eigenvalue  and  isotypic aspect, let us consider the standard $2$-torus $M=T^2\subset \R^3$ on which $G=\SO(2)$ acts by rotations around the symmetry axis. Then all orbits are $1$-dimensional and of principal type.
 Proposition \ref{thm:bounds} then implies the bound
\bqn 
\norm{u}_{\L^\infty(T^2)} =O(1 ) , \qquad u \in \L^2(T^2), \, \norm{u}_{\L^2}=1,
\eqn
for any eigenfunction  of the Laplace-Beltrami operator $\Delta$ on $T^2$. Now, via  the identification 
\bqn 
\R^2/\Z^2 \stackrel{\simeq} \longrightarrow T^2 \simeq S^1 \times S^1, (x_1,x_2) \, \longmapsto \, (e^{2\pi i x_1}, e^{2\pi i x_2}),
\eqn
the  standard orthonormal basis of eigenfunctions of $\Delta$ is given by $\mklm{e^{2\pi i k_1 x_1}e^{2\pi i k_2 x_2}\mid (k_1,k_2) \in \Z^2}$, showing that the  bounds in Proposition \ref{thm:bounds} and Theorem \ref{thm:20.02.2016}  are almost sharp both in the eigenvalue  and  isotypic aspect.

\appendix
\renewcommand*{\thesection}{\Alph{section}}

%

\providecommand{\bysame}{\leavevmode\hbox to3em{\hrulefill}\thinspace}
\providecommand{\MR}{\relax\ifhmode\unskip\space\fi MR }
\providecommand{\MRhref}[2]{%
  \href{http://www.ams.org/mathscinet-getitem?mr=#1}{#2}
}
\providecommand{\href}[2]{#2}


\end{document}